\renewcommand{\paragraph}{\roman{paragraph}}
\newcommand{\R}{\mathbb{R}}
\newcommand{\Z}{\mathbb{Z}}
\newcommand{\abs}[1]{\left|#1\right|}
\newcommand{\F}{\mathbb{F}}
\newcommand{\whom}{\mathrm{w}_{\mathrm{hom}}}
\newcommand{\dhom}{\mathrm{d}_{\mathrm{hom}}}
\newtheorem{theorem}{Theorem}
\newtheorem{lemma}{Lemma}
\newtheorem{proposition}{Proposition}
\newtheorem{conjecture}{Conjecture}
\theoremstyle{definition}
\newtheorem{definition}{Definition}
\newcommand{\cayley}{\mathrm{C}}
\newcommand{\SWRGparam}{s}
\begin{document}
\title{\bf Three-weight codes over rings  \\and\\ strongly walk regular graphs\thanks{This research is supported by National Natural Science Foundation of China (61672036), Excellent Youth Foundation of Natural Science Foundation of Anhui Province (1808085J20),
Technology Foundation for Selected Overseas Chinese Scholar, Ministry of Personnel of China (05015133) and
Key projects of support program for outstanding young talents in Colleges and Universities (gxyqZD2016008).}}

\author{
Michael Kiermaier\thanks{Department of Mathematics, University of Bayreuth, Bayreuth, Germany},
Sascha Kurz$^\ddagger$,
Minjia Shi\thanks{School of Mathematical Sciences of Anhui University, Anhui, 230601, P. R. China}, and
Patrick Sol\'e\thanks{I2M, CNRS, Aix-Marseille Univ, Centrale Marseille, Marseille, France}
}

\date{}

\maketitle
\noindent
{\bf Abstract:} { We construct strongly walk-regular graphs as coset graphs of the duals of  codes with three non-zero homogeneous weights over $\Z_{p^m},$ for $p$ a prime,
and more generally over chain rings of depth $m$, and with a residue field of size $q$, a prime power.
Infinite families of examples are built from Kerdock and generalized Teichm\"uller codes. As a byproduct, we give an alternative proof that the Kerdock code is nonlinear.}

\noindent
{\bf Keywords:} strongly walk-regular graphs, three-weight codes, homogeneous weight, Kerdock codes, Teichm\"uller codes.

\noindent
{\bf MSC (2010):} Primary 05 E 30,  Secondary 94 B 05
\section{Introduction}\label{s:intro} Since the seminal article of Delsarte \cite{De},
there is a well-known interplay between two-weight codes and strongly regular graphs (SRG) via the coset graph of the dual code \cite{BCN,BH}.
Strongly walk-regular graphs (SWRG) were introduced in \cite{DO} as a generalization of strongly regular graphs. Instead of a regularity condition bearing on paths
of length $2$, the notion of SWRG demands a regularity on paths of length $\SWRGparam>1$. Specifically, a graph is $\SWRGparam$-SWRG if there
are three integers $(\lambda,\mu,\nu)$ such that the number of paths of length $\SWRGparam$ between any two vertices $x$ and $y$ is
\begin{itemize}
\item $\lambda$ if $x$ and $y$ are connected;
\item $\mu$ if $x$ and $y$ are disconnected;
\item $\nu$ if $x=y.$
\end{itemize}
This is reminiscent of strongly regular graph (SRG) a regular graph on $\nu$ vertices such that the number of paths of length $2$ between any two vertices $x$ and $y$ is
\begin{itemize}
\item $\lambda$ if $x$ and $y$ are connected;
\item $\mu$ if $x$ and $y$ are disconnected;
\item $\kappa$ if $x=y$.
\end{itemize}

Note that, by definition, strongly regular graph are $s$-SWRG for all $s>1$.
In a previous paper \cite{SS}, the authors connected together the two notions of triple sum sets (TSS) and SWRG over finite fields. In the present
paper we extend this correspondence to a correspondence between TSS's over certain finite rings and SWRG's via the notions of syndrome graphs and homogeneous
weights. This leads us to study the sum of the weights of three-weight codes. The first class of rings we consider is that of $\Z_{p^m},$ for $p$ a prime, when
the homogeneous weight can described explicitly. Building on the study in \cite{S+}, we give a sufficient condition to construct SWRGs from the dual code of a
three-weight code. This condition bears on the sum of the three weights in question. In concrete examples, we focus on $\Z_4$-codes which have been studied
extensively since \cite{HKCSS}. The Gray map, which preserves MacWilliams duality, allows us to write the four Pless power moments for the binary image code,
and to classify the relevant three-weight codes in short length. Further, a class of irreducible cyclic codes \cite{S} related to the Kerdock code \cite{HKCSS}
gives an alternative proof of the nonlinearity of the binary Kerdock code. The second class of rings considered is that of chain rings of given depth and residue
field. The $p$-ary three-weight codes introduced in \cite{Sh3}, and the generalized Teichm\"uller codes of \cite{K-Diss} provide infinite families of three-weight codes leading to SWRG's.

The material is organized as follows. The next section collects some prerequisite on, successively, homogeneous weights, triple sum sets, and coset graphs of codes over rings.
Section~\ref{sec_main} derives the main results. Section~\ref{sec_z4_codes} focuses on $\Z_4$ codes and gives examples in short lengths as well as an infinite family
related to the Kerdock codes. Section~\ref{sec_cr} describes how the main results extend when the alphabet is a finite chain ring, and gives an infinite family related to generalized Teichm\"uller codes.
\section{Background}\label{sec:back}
\subsection{Homogeneous weight}
\label{subsec_homogeneous weight}
For simplicity, let $R$ be a finite commutative ring.
A \emph{linear code} $C$ over $R$ of length $n$ is an
$R$-submodule of the standard $R$-module $R^n$. The
\emph{dual code} of $C$ is defined as $C^\bot=\{x\in R^n\;x\cdot C=0\}$,
where $x\cdot y=x_1y_1+\dots+x_ny_n$ denotes the standard inner
product on $R^n$.
\begin{definition}\label{dfn:whom}
Let $R$ be a finite ring. A function $w\colon R\to\R$
is called a \emph{homogeneous weight}, if $w(0)=0$ and
\begin{enumerate}
\item if $Rx=Ry$ then $w(x)=w(y)$ for all $x,y \in R$;
\item there exists a real number $\gamma\neq 0$ such that
$\sum_{y\in Rx} w(y)=\gamma \vert Rx \vert$ for all $x \in R\setminus\{0\}$.\\
\end{enumerate}
\end{definition}
It is known from the work in
\cite{ioana-werner97,greferath-schmidt99,st:homippi} that a
homogeneous weight exists for all finite commutative rings $R,$ and is uniquely determined up to the
normalization factor $\gamma$. In the sequel we will denote the
homogeneous weight by $\whom$, assuming that $\gamma$ has been fixed.
As usual we extend $\whom$ to a weight function on $R^n$ by
$$\whom(x_1,\dots,x_n)=\sum_{i=1}^{n}\whom(x_i).$$
The set $R^n$ then forms a metric space under the \emph{homogeneous
  distance} $\dhom$ defined by $\dhom(x,y)=\whom(x-y)$.  A code
$C\subseteq\R^n$ is called a \emph{(homogeneous) three-weight code} with
weights $w_1<w_2<w_3$ if every non-zero codeword has homogeneous weight
$w_1$, $w_2$, or $w_3$, and the three weights actually occur. If the code
$C$ is linear then on $C\times C$ the homogeneous distance takes only these
three values and zero.

A matrix $G\in R^{\ell\times n}$ is called a \emph{generator matrix}
of the linear code $C$ if the rows of $G$ generate $C$ as an
$R$-module.

\begin{definition}
Let $C$ have $\ell\times n$ generator matrix $G=[g_1\vert \dots
\vert g_n]$. The code $C$ is called:
\begin{enumerate}
\item \emph{proper} if $\whom(c)=0$ implies
  $c=0$ for all $c \in C$;
\item \emph{regular} if $\{x\cdot g_i;x\in R^{\ell}\}=R$ for $i=1,\dots,n$;
\item \emph{projective} if $Rg_i\neq Rg_j$ for any pair of distinct
  coordinates $i,j \in \{1,\dots,n\}$.
\end{enumerate}
\end{definition}

In other words, a code is regular if every column contains at least one unit element. We
extend this notion also to a single column vector.

In the sequel we shall be interested mostly in the case
$R=\Z_{p^m}$, the ring of integers modulo a prime power $p^m$.
In this case the homogeneous weight is given by
\begin{equation*}
\whom(x) =
\begin{cases}
    0  &  \text{if $x = 0$},\\
    p^{m-1}   & \text{if $0\neq x\in p^{m-1}\Z_{p^m}$},\\
    (p-1)p^{m-2}  & \text{otherwise}.
\end{cases}
\end{equation*}
The chosen normalization constant is $\gamma=(p-1)p^{m-2}$, which is
the most convenient for our purposes. For $m\ge 2$ the homogeneous weight is
always an integer. Note that every code $C$ over $R=\Z_{p^m}$ is proper, while
there are some non-proper chain rings. Moreover, a code that is regular and projective
iff the minimum distance $d^\bot$ of its dual code is at least $(2p-1)\cdot p^{m-2}$.\footnote{The support
of a dual codeword that is not excluded by the conditions regular and projective has cardinality at least three,
so that the weight is strictly larger than $2(p-1)p^{m-2}$, i.e., at least $(2p-1)p^{m-2}$. For the other direction
note that $(2p-1)p^{m-2}>p^{m-1}$ and $(2p-1)p^{m-2}>2\cdot (p-1)p^{m-2}$.} So, for $p=m=2$ regular and projective
is equivalent to $d^\bot\ge 3$.

\subsection{Triple sum and $\mathbf{s}$-sum sets}
\label{subsec_tss}

We extend the definition of triple sum sets given in \cite{CW} from finite fields to finite commutative rings $R$ with a unit group $R^\times.$
The set $\Omega \subseteq R^k$ is a \emph{triple sum set} (TSS) if
it is stable by scalar multiplication by units $\in R^\times$ and if there are integer constants
$\sigma_0$ and $\sigma_1$ such that a non-zero $h\in R^k$ can be written as
$$h=x+y+z,\,\mbox{with}\, x,y,z \in \Omega,$$
\begin{itemize}
\item $\sigma_0$ times if $h\in \Omega,$
\item $\sigma_1$ times if $h\in R^k \setminus\Omega$.
\end{itemize}
This can generalized immediately to the notion of an \emph{$s$-sum set} by replacing the equation $h=x+y+z$ by $h=\sum\limits_{i=1}^sx_i$ where each $x_i\in \Omega$.
\subsection{Syndrome graph}
\label{subsec_syndrome_graph}

An {\em eigenvalue} of a graph $\Gamma$ (i.e., an eigenvalue of its adjacency
matrix) is called a \emph{restricted eigenvalue} if there is a
corresponding eigenvector which is not a multiple of the all-one vector
\textbf{1}. Note that for an $\eta$-regular connected graph, the
restricted eigenvalues are simply the eigenvalues different from $\eta$.

\begin{definition}\label{2}
  Let $T$ be a finite abelian group and $S\subseteq T$ a subset
  satisfying $S=-S$ and $0_T\notin S$. The
  corresponding {\em Cayley graph} $\cayley(T,S)$ has vertex set equal to
  $T$; two vertices $g, h \in T$ are adjacent in $\cayley(T,S)$ iff $g-h\in S$.
\end{definition}
The conditions imposed on $S$ ensure that $\cayley(T,S)$ is a simple
graph. The graph $\cayley(T,S)$ is regular of degree $\abs{S}$, and it is
connected iff $S$ generates the group $T$.
We use the same idea as in \cite{CK} to associate with any linear
code $C$ over $\Z_{p^m}$ a certain Cayley graph, called the \emph{syndrome
  graph} of $C$ and denoted by $\Gamma(C)$.

Let $H=[h_1\mid \dots\mid h_n]\in(\Z_{p^m})^{\ell\times n}$ be a
parity-check matrix of $C$.
The vertex set of $\Gamma(C)$ is
$V=\bigl\{Hx;x\in(\Z_{p^m})^n\bigr\}$, the column space of $H$, which
is isomorphic to the dual code $C^\bot$.  The elements of $V$ are called
\emph{syndromes} of $C$.  Two syndromes $Hx$, $Hy$
are adjacent in $\Gamma(C)$ if
they differ by a unit-multiple of a column of $H$\,:
\begin{equation}
  \label{eq:Gamma(C)}
  Hx\sim Hy\;:\iff\;H(x-y)=uh_i\quad\text{for some $1\leq i\leq n$ and
    $u\in\Z_{p^m}^\times$}.
  \end{equation}
It is obvious that $\Gamma(C)$ is the Cayley graph of $V$
corresponding to the generating $S=\{uh_i;u\in \Z_{p^m}^\times,1\leq
i\leq n\}$. Since $0\notin S$ and $-S\subseteq S$, $\Gamma(C)$
is simple. Moreover, $\Gamma(C)$ is
regular of degree $\vert S \vert=p^{m-1}(p-1)n$ and has
$\frac{p^{mn}}{\vert C \vert}$ vertices. The latter follows from
$\abs{C}\abs{C^\bot}=\abs{(\Z_{p^m})^n}=p^{mn}$.

As defined, $\Gamma(C)$ depends on the particular choice of $H$. The
terminology ``syndrome graph of $C$'' is justified, however, since
$\Gamma(C)$ has another representation as a ``coset graph'' of $C$,
which shows that its isomorphism type is determined by $C$.
It is well-known that there is a one-to-one correspondence between
syndromes of a code and its cosets \cite{SAN}, given by $x+C\mapsto
Hx$. The condition in \eqref{eq:Gamma(C)} can be restated as
$H(x-y-ue_i)=0$, where $e_i$ denotes the $i$th standard unit vector in
$(\Z_{p^m})^n$. Thus $\Gamma(C)$ is isomorphic to the graph whose
vertices are the cosets of $C$ and in which two vertices $x+C$, $y+C$
are adjacent if their difference is a coset of the form $ue_i+C$.
This is the Cayley graph of the quotient group
$(\Z_{p^m})^n/C$ with generating set
$$S^*=\{a+C;\exists i\text{ such that $a_i=u\in\Z_{p^m}^\times$ and
  $a_j=0$ for $j\neq i$}\}.$$

Now, we recall the relation
between the weight distribution of a linear code over $\Z_{p^m}$ and
the eigenvalues of the syndrome graph of its dual code. This extension of  Lemma 3.4 in \cite{CK} was derived in \cite{S+}
Theorem~4.1 and its proof is omitted here.
\begin{theorem}\label{thm_eigenvalues}
  Suppose that $C$ is a regular, projective linear code over $\Z_{p^m}$ with
  homogeneous weights $w_i$ and corresponding weight distribution
  $A_i=\abs{\{x\in C;\whom(x)=w_i\}}$. Then
  the eigenvalues of $\Gamma(C^\bot)$ are $n(p-1)p^{m-1}-pw_i$ with
  multiplicity $A_i$.
\end{theorem}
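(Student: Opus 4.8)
The plan is to diagonalize the adjacency matrix of $\Gamma(C^\bot)$ by exploiting its structure as a Cayley graph on the abelian group $V\cong C$ (the column space of a generator matrix $G$ of $C$ playing the role of the syndrome space of $C^\bot$). The eigenvalues of a Cayley graph $\cayley(T,S)$ on a finite abelian group $T$ are indexed by the characters $\chi\in\widehat T$, with eigenvalue $\sum_{s\in S}\chi(s)$. Here $T=C$, or more precisely the quotient $(\Z_{p^m})^n/C^\bot$, whose characters are naturally parametrized by codewords $c\in C$: to $c=(c_1,\dots,c_n)$ we associate $\chi_c(v)=\zeta^{\langle c,v\rangle}$ for a fixed primitive $p^m$-th root of unity $\zeta$, where the pairing is the one identifying $C$ with $(\Z_{p^m})^n/C^\bot$. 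The generating set is $S=\{u\,e_i : u\in\Z_{p^m}^\times,\ 1\le i\le n\}$ (pushed into the quotient), so the eigenvalue attached to $c$ is
\[
\theta_c \;=\; \sum_{i=1}^n \sum_{u\in\Z_{p^m}^\times} \zeta^{u c_i}\,.
\]

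The next step is to evaluate the inner Gauss-type sum $\sum_{u\in\Z_{p^m}^\times}\zeta^{u a}$ for $a\in\Z_{p^m}$ and to recognize it in terms of the homogeneous weight. A direct computation gives: if $a$ is a unit, the sum is $\mu(p^m)=0$ for $m\ge 2$ (and $-1$ for $m=1$); if $a\in p^{m-1}\Z_{p^m}\setminus\{0\}$ the sum equals $-p^{m-1}$; if $a=0$ it equals $|\Z_{p^m}^\times|=(p-1)p^{m-1}$; and in the remaining cases it equals $0$ or $-(p-1)p^{m-2}$ according to the $p$-adic valuation of $a$. The clean way to package this is the identity
\[
\sum_{u\in\Z_{p^m}^\times}\zeta^{u a} \;=\; (p-1)p^{m-1}\cdot[a=0]\;-\;p\cdot\whom(a)\,,
\]
valid for all $a\in\Z_{p^m}$, which one checks case by case against the displayed formula for $\whom$. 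Summing over the $n$ coordinates of a fixed codeword $c$ and using $\whom(c)=\sum_i\whom(c_i)$ yields
\[
\theta_c \;=\; (p-1)p^{m-1}\cdot\#\{i:c_i=0\}\;-\;p\,\whom(c)\,.
\]
To finish I must replace $\#\{i:c_i=0\}$ by $n$. This is exactly where the hypotheses \emph{regular} and \emph{projective} enter: the trivial character $c=0$ gives the degree eigenvalue $n(p-1)p^{m-1}$, and for $c\neq 0$ one argues that $\#\{i:c_i=0\}=0$, i.e. every nonzero codeword has full support. Concretely, if some nonzero $c$ had a zero coordinate, the corresponding column of $G$ would be redundant in a way that either violates projectivity or produces a low-weight dual codeword; equivalently, regular + projective forces $d^\bot\ge (2p-1)p^{m-2}\ge 3$ as noted after the weight formula, and a codeword of $C$ with a zero in position $i$ together with a suitable short dual word would contradict this. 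Granting full support, $\theta_c=n(p-1)p^{m-1}-p\,\whom(c)$, so the eigenvalue associated to $c$ is $n(p-1)p^{m-1}-pw_i$ whenever $\whom(c)=w_i$, and its multiplicity is the number of such codewords, namely $A_i$.

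The main obstacle is the full-support claim, i.e. pinning down precisely why \emph{regular} and \emph{projective} together guarantee that no nonzero codeword of $C$ has a zero coordinate, and doing so cleanly in the $\Z_{p^m}$ (module, not vector-space) setting where "column of $G$ is a unit multiple of another" is the relevant degeneracy. Once that combinatorial/linear-algebra point is settled, the character computation and the Gauss-sum identity are routine. Since this is an extension of Theorem~4.1 of \cite{S+} and Lemma~3.4 of \cite{CK}, the cleanest exposition is to cite those for the structural Cayley-graph/character framework and present only the $\Z_{p^m}$-specific evaluation of $\theta_c$; indeed the statement says the proof is omitted here, so in the paper itself it suffices to record the formula $\theta_c=n(p-1)p^{m-1}-p\,\whom(c)$ and the role of the regular–projective hypothesis.
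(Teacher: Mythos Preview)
Your overall framework---Cayley graph on an abelian group, eigenvalues via characters, reducing to a coordinate-wise Gauss-type sum---is exactly the approach behind \cite[Theorem~4.1]{S+}, which the paper cites in lieu of a proof. The gap is a computational slip in the key identity, and everything downstream of it (the ``full support'' claim) is an artifact of that slip.

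The correct evaluation is
\[
\sum_{u\in\Z_{p^m}^\times}\zeta^{u a}\;=\;(p-1)p^{m-1}\;-\;p\,\whom(a)
\qquad\text{for every }a\in\Z_{p^m},
\]
with $(p-1)p^{m-1}$ a \emph{constant}, not multiplied by $[a=0]$. (Check $p=m=2$: for $a=1$ the left side is $i+i^3=0$, while your version gives $-2$; for $a=2$ the left side is $-2$, yours gives $-4$.) This identity is essentially the character-theoretic description of the homogeneous weight. Summing over the $n$ coordinates of $c=yG\in C$ gives immediately
\[
\theta_c\;=\;n(p-1)p^{m-1}-p\,\whom(c),
\]
which is the statement of the theorem---no support argument is needed.

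Your attempted rescue, that regular~$+$~projective force every nonzero codeword of $C$ to have full support, is false: the identity matrix $I_\ell$ over $\Z_{p^m}$ generates a regular projective code whose nonzero codewords almost never have full support. The actual role of the two hypotheses is only to guarantee that the connection set $S=\{u g_i:u\in\Z_{p^m}^\times,\,1\le i\le n\}$ contains no repetitions and excludes $0$, so that $\Gamma(C^\bot)$ is a simple graph of degree $n(p-1)p^{m-1}$; they do not enter the eigenvalue computation itself.
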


The {\em coset  graph with $b$ loops} $\Gamma_C^b$ of a code $C$ is obtained from $\Gamma_C$ by adding $b$ loops around every vertex. The following result is immediate from the previous theorem. The proof is omitted.
{\corollary \label{extcor} If $C$ is a $\Z_{p^m}$-code, 
with dual weight distribution $[\langle i,A_i\rangle]$, such that $C^\bot$ is regular and projective, then the spectrum of $\Gamma_C^b$ is $\{(b+n(p-1)p^{m-1}-pi)^{A_i}\}$.
Thus $A_i$ is the frequency of the homogeneous weight $i$ in $C^\bot$ and the multiplicity of the eigenvalue $b+n(p-1)p^{m-1}-pi$.}

\section{Main Results}
\label{sec_main}
Let $\overline{R^k}$ denote the set of all regular vectors in $R^k$, i.e., $$\overline{R^k}=\left\{\omega\in R^k\,:\, \{\omega\cdot x\,:\,x\in R^ k\}=R\right\}.$$ If
$\Omega\subseteq \overline{R^k}$, we denote by $C(\Omega)$ the regular and projective code of length $n=\frac{\vert \Omega\vert}{|R|-1}$ obtained as the kernel of the
$k \times n$ matrix $H$ with columns the projectively non-equivalent nonzero elements of $\Omega$. Thus $H$ is the check matrix of $C(\Omega)$.
To simplify notation, let $\Gamma_{C(\Omega)}=\Gamma(\Omega)$ and $\Gamma_{C(\Omega)}^b=\Gamma(\Omega)^b$, where we also write $\Gamma(\Omega)^0$ for $\Gamma(\Omega)$.

The following result is immediate from the definitions in the introduction, and will be used repeatedly and implicitly in the rest of the paper.

{\theorem \label{equiv} The following are equivalent
\begin{itemize}
\item $\Omega$ (resp $\Omega \bigcup 0$) is an $s$-sum set,
\item $\Gamma(\Omega)$ (resp. $\Gamma(\Omega)^1$) is an $s$-SWRG.
\end{itemize} }

\begin{proof} Observe that $\Gamma(\Omega)$ is a Cayley graph on the group $(R^k,+)$ with generators the columns of $H$. Thus the definition of an $s$-sum set can
be regarded as a statement on the number of paths of length $s$ between $0$ and a column $h$ of $H$.
\end{proof}

The following result comes from the well-known connection between SRGs and two-weight codes. It is described for codes over fields in \cite{BCN,BH}, and for codes over rings in \cite{Byrne1}.

{\theorem \label{SRG} If $C(\Omega)^\bot$ is a two-weight code, then both $\Omega \bigcup 0$ and $\Omega$ are $s$-sum sets for all $s>1.$}

\begin{proof} If $C(\Omega)^\bot$ is a two-weight code, it is well-known that $\Gamma(\Omega)$ is an SRG \cite{Byrne1}. The result follows by Theorem \ref{equiv}, and the observation, made in the introduction,  that SRGs are $s$-SWRG for all $s>1.$
\end{proof}

The following result was observed in \cite{HRG} Theorem III.6.

{\theorem \label{HRGnew} If $\Omega$ or $\Omega \bigcup 0$ is an $s$-sum sets, then $C(\Omega)^\bot$ has at most three nonzero weights. Furthermore,  if $s$ is even, then $C(\Omega)^\bot$ is an $s$-weight code.}

We come to the coding-theoretic characterization of $3$-sum sets which is contained for finite fields  in \cite[Th. 2.1]{CW}. The consideration of syndrome graphs with loops might seem artificial but is justified
by the existence of several non-trivial examples, as evidenced by Section~\ref{sec_z4_codes}.

The relation with the $3$-SWRG property and the eigenvalues of a connected regular graph was obtained in \cite{DO} Proposition~4.1:
{\theorem \label{vanDamProp41} Let $\Gamma$ be a connected regular graph with four distinct eigenvalues $k>\theta_1>\theta_2>\theta_3$, then $\Gamma$
is a $3$-SWRG iff $\theta_1+\theta_2+\theta_3=0$.
}

In our setting the formulas for the eigenvalues in Theorem~\ref{thm_eigenvalues} and Corollary~\ref{extcor} give:
{\theorem \label{CWnew} Assume that $C(\Omega)^\bot$ is of length $n$ and has three {nonzero} weights $w_1<w_2<w_3.$ Let $b$ be any non-negative integer.
\begin{itemize}
\item $\Omega$ is a TSS iff $w_1+w_2+w_3=3n(p-1)p^{m-2}$;
\item $\Omega \bigcup 0$ is a TSS iff $w_1+w_2+w_3=3\frac{1+n(p-1)p^{m-1}}{p}$;
\item $\Gamma_{C(\Omega)}^b$ is a $3$-SWRG iff $w_1+w_2+w_3=3\frac{b+n(p-1)p^{m-1}}{p}$;
\end{itemize}
}
\begin{proof} For the first two cases we apply Theorem~\ref{equiv} to conclude the equivalence to $\Gamma_{C(\Omega)}^b$ being a $3$-SWRG for $b=0$
and $b=1$, respectively. Combining Corollary~\ref{extcor} with Theorem~\ref{vanDamProp41} gives the third case, which contains the first two cases for $b=0$
and $b=1$, respectively.
\end{proof}
Note that if $m\ge 2$ and $p\neq 3$, then the second case of Theorem~\ref{CWnew} is impossible. More generally, for the third case
of Theorem~\ref{CWnew}  the assumptions $m\ge 2$ and $p\neq 3$ imply that $b$ has to be divisible by $p$. Then, the condition may be rewritten as
$w_1+w_2+w_3\ge 3n(p-1)p^{m-2}$ and $w_1+w_2+w_3\equiv \pmod 3$.

{\bf Example 1:} If $p=m=2$ and $n=4$, then the possible triples of weights in the first case of Theorem~\ref{CWnew} 
are, with obvious notation, $\{3 4 5,\,
2 4 6,\,
1 5 6,\,
2 3 7,\,
1 4 7,\,
1 3 8\}$.
We will see in Subsection~\ref{subsec_z4_codes_small_length} that none of these possibilities can be attained by a $\Z_4$-code and the smallest possible
length for $p=m=2$ and $b=0$ is indeed $n=6$.

Proposition~4.3 in \cite{DO} gives an explicit diophantine equation bearing on the eigenvalues of $\Gamma$ to be an $s$-SWRG:
{\theorem Let $\Gamma$ be a connected regular graph with four distinct eigenvalues $k>\theta_1>\theta_2>\theta_3$ and $s\ge 3$. Then $\Gamma$ is an $s$-SWRG
iff
\begin{equation}
  (\theta_2-\theta_3)\theta_1^s+(\theta_3-\theta_1)\theta_2^s+(\theta_1-\theta_2)\theta_3^s=0.\label{eq_diophantine_vandam}
\end{equation}
}


We remark that for $s=3$ Equation~(\ref{eq_diophantine_vandam}) is equivalent to $(\theta_1-\theta_2)(\theta_1-\theta_3)(\theta_2-\theta_3)(\theta_1+\theta_2+\theta_3)=0$,
i.e., Theorem~\ref{vanDamProp41} is contained as a special case. Similar as in the proof of Theorem~\ref{CWnew}, we can use the formulas for the eigenvalues in
Theorem~\ref{thm_eigenvalues} and Corollary~\ref{extcor} to transfer \cite[Proposition 4.3]{DO} to $s$-sum sets:
{\theorem \label{CWnew2} Assume that $C(\Omega)^\bot$ is of length $n$ and has three {nonzero} weights $w_1<w_2<w_3.$ Let $b\ge 0$ and $s\ge 3$ be integers.
\begin{itemize}
\item $\Omega$ is an $s$-sum set iff $(w_3-w_2)(n(p-1)p^{m-2}-pw_1)^s+(w_1-w_3)(n(p-1)p^{m-2}-pw_2)^s+(w_2-w_1)(n(p-1)p^{m-2}-pw_3)^s=0$.
\item $\Omega\bigcup 0$ is an $s$-sum set iff $(w_3-w_2)(1+n(p-1)p^{m-2}-pw_1)^s+(w_1-w_3)(1+n(p-1)p^{m-2}-pw_2)^s+(w_2-w_1)(1+n(p-1)p^{m-2}-pw_2)^s=0$.
\item $\Gamma_{C(\Omega)}^b$ is an $s$-SWRG iff $(w_3-w_2)(b+n(p-1)p^{m-2}-pw_1)^s+(w_1-w_3)(b+n(p-1)p^{m-2}-pw_2)^s+(w_2-w_1)(b+n(p-1)p^{m-2}-pw_2)^s=0$.
\end{itemize}
}

{\bf Example 4:} If $n=12$, $p=m=2$ the only triples $(w_1,w_2,w_3)$ satisfying the first condition of Theorem~\ref{CWnew2} 
for $s=5$ are 
$\{11-i,12,13+i\},$ for $i=0,1,\dots,10$.

Note that $w_2=12=n$ in all cases. This can be partially explained by the fact that Equation~(\ref{eq_diophantine_vandam}) admits the solution $\theta_2=0$,
$\theta_1=-\theta_3$ for all odd $s$. In other words, a graph $\Gamma$ with these eigenvalues is an $s$-SWRG for all odd $s>1$, cf.~\cite[Proposition 4.2]{DO}. Using
Theorem~\ref{equiv} and Theorem~\ref{thm_eigenvalues} we can transfer this observation to a result that is new in the context of $s$-sum sets:
{\theorem \label{new} Assume that $C(\Omega)^\bot$ is of length $n$ and has three {nonzero} weights $w_1<w_2=n(p-1)p^{m-2}<w_3,$  with $w_1+w_3=2n(p-1)p^{m-2}.$
Then $\Omega$ is an $s$-sum set for all odd $s>1.$}

In \cite[Theorem 4.4]{DO} it is essentially proven that if Equation~(\ref{eq_diophantine_vandam}) admits a solution $(\theta_1,\theta_2,\theta_3)$
with $\theta_1>\theta_2>\theta_3$ for two different values of $s>1$, then $\theta_2=0$ and $\theta_1=-\theta_3$. This directly leads to:
{\theorem\label{thm_w2} Assume that $C(\Omega)^\bot$ is of length $n$ and has three {nonzero} weights $w_1<w_2<w_3.$ If $w_2 \neq n(p-1)p^{m-2},$ or $w_1+w_3\neq 2n(p-1)p^{m-2},$ then there is at most one $s>1$ such that $\Omega$
is an $s$-sum set. }

If we are not in the situation of Theorem~\ref{new}, then any given code can yield an $s$-sum set for at most one value of $s$. There are other solutions
of Equation~(\ref{eq_diophantine_vandam}), e.g.\ $(5,\tfrac{1}{2}(-3+\sqrt{281}),\tfrac{1}{2}(-3-\sqrt{281}))$ for $s=5$. However, the found solutions are non-integral
and no graph that is an $s$-SWRG and attains these eigenvalues is known, see  \cite[p. 808]{DO}.
Since the eigenvalues corresponding to
an $s$-sum set are integral, we state:

\begin{conjecture}
\label{conj_s_sum_set}
$\Omega$ is an $s$-sum set for some $s>3$ iff $$(w_1,w_2,w_3)=(w_1,n(p-1)p^{m-2},2n(p-1)p^{m-2}-w_1),$$ for some $0<w_1<n(p-1)p^{m-2}$.
\end{conjecture}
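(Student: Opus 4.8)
Here is a line of attack on Conjecture~\ref{conj_s_sum_set}; it reduces the statement to a Diophantine non-existence assertion which is, in essence, the open problem recorded in \cite{DO}, and which I expect to be the real obstacle. The implication ``$\Leftarrow$'' is already a theorem: if $(w_1,w_2,w_3)=\bigl(w_1,\,n(p-1)p^{m-2},\,2n(p-1)p^{m-2}-w_1\bigr)$ with $0<w_1<n(p-1)p^{m-2}$, then $w_1<w_2=n(p-1)p^{m-2}<w_3$ and $w_1+w_3=2n(p-1)p^{m-2}$, so Theorem~\ref{new} applies and $\Omega$ is an $s$-sum set for every odd $s>1$, in particular for $s=5>3$. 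So all the work is in the converse.

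For ``$\Rightarrow$'', suppose $\Omega$ is an $s$-sum set for some $s>3$. By Theorem~\ref{equiv}, $\Gamma(\Omega)=\Gamma_{C(\Omega)}$ is an $s$-SWRG; it is regular and connected, and by Theorem~\ref{thm_eigenvalues} (equivalently Corollary~\ref{extcor} with $b=0$) it has the four distinct eigenvalues $k=n(p-1)p^{m-1}>\theta_1>\theta_2>\theta_3$, where $\theta_i=n(p-1)p^{m-1}-pw_i\in\Z$ (the strict inequalities use $0<w_1<w_2<w_3$, and $\theta_i<k$ uses $w_i>0$). By the criterion of \cite[Proposition~4.3]{DO} recalled above, being an $s$-SWRG is equivalent to \eqref{eq_diophantine_vandam} for $(\theta_1,\theta_2,\theta_3)$; and, substituting back, the shape of $(w_1,w_2,w_3)$ in the conjecture is \emph{precisely} $\theta_2=0$ together with $\theta_1=-\theta_3$ (after which $0<w_1<w_2$ is automatic, since $w_1$ is the least nonzero weight). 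Thus the conjecture is equivalent to the arithmetic statement: for $s>3$, every solution of \eqref{eq_diophantine_vandam} in integers $\theta_1>\theta_2>\theta_3$ has $\theta_2=0$ and $\theta_1+\theta_3=0$.

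To make this tractable, divide \eqref{eq_diophantine_vandam} by the nonzero Vandermonde $(\theta_1-\theta_2)(\theta_2-\theta_3)(\theta_1-\theta_3)$: by the bialternant identity
\[
(\theta_2-\theta_3)\theta_1^{s}+(\theta_3-\theta_1)\theta_2^{s}+(\theta_1-\theta_2)\theta_3^{s}=(\theta_1-\theta_2)(\theta_2-\theta_3)(\theta_1-\theta_3)\,h_{s-2}(\theta_1,\theta_2,\theta_3),
\]
where $h_{s-2}$ is the complete homogeneous symmetric polynomial of degree $s-2$ (read off from $\sum_{k\ge0}h_kt^k=\prod_i(1-\theta_it)^{-1}$), the equation becomes $h_{s-2}(\theta_1,\theta_2,\theta_3)=0$. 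If $s$ is even, then $s-2$ is even, $h_{s-2}$ is a positive definite form in three variables, so it has no zero with the $\theta_i$ distinct (consistently, Theorem~\ref{HRGnew} forbids an even $s$-sum set when $C(\Omega)^\bot$ has three weights). So assume $s\ge5$ is odd. If moreover $\theta_1+\theta_2+\theta_3=0$, then $h_{s-2}$ evaluated on this plane is divisible by $\theta_1\theta_2\theta_3$ (since $s-2$ is odd, every monomial of $h_{s-2}|_{e_1=0}$ carries a power of $e_3$), and the cofactor cannot vanish for three distinct reals, which follows from a short analysis of the discriminant $-4e_2^3-27e_3^2$ of $t^3+e_2t-e_3$; hence some $\theta_i=0$, and with zero sum and $\theta_1>\theta_2>\theta_3$ this forces $\theta_2=0$, $\theta_1=-\theta_3$.

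The remaining case $\theta_1+\theta_2+\theta_3\neq0$ must be excluded, and this is where the real difficulty lies. A first step is Theorem~\ref{thm_w2} (which rests on \cite[Theorem~4.4]{DO}): since the conjectured shape fails, $s$ is the \emph{unique} integer $>1$ for which $\Omega$ is an $s$-sum set, so one cannot eliminate an exotic eigenvalue triple by passing to a second value of $s$. What then remains is to rule out a single exotic integral solution of \eqref{eq_diophantine_vandam}; already for $s=5$ this asks whether $(\theta_1+\theta_2+\theta_3)(\theta_1^2+\theta_2^2+\theta_3^2)+\theta_1\theta_2\theta_3=0$ has an integral solution with $\theta_1>\theta_2>\theta_3$ besides $(t,0,-t)$. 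For each fixed $(p,m,n)$ the eigenvalues are bounded, $-k\le\theta_3<\theta_2<\theta_1<k$, and adjoining the conditions that the $A_i$ be positive integers forming, together with the $w_i$, the homogeneous weight distribution of a genuine linear code (the Pless power moments, or the linear-programming bound) turns the remaining possibilities into a finite computation; but a uniform argument for all $s$ appears to need genuinely new input, in line with \cite{DO}, where no graph attaining an exotic eigenvalue triple is known to exist.
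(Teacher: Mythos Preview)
The statement is labelled a \emph{conjecture} in the paper; there is no proof given, and the paper explicitly motivates it by observing that, while non-integral solutions of \eqref{eq_diophantine_vandam} such as $\bigl(5,\tfrac12(-3+\sqrt{281}),\tfrac12(-3-\sqrt{281})\bigr)$ exist for $s=5$, no \emph{integral} exotic solution---and no graph realizing one---is known. So there is no ``paper's own proof'' to compare your attempt against.

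Your reduction is sound and coincides with the paper's framing. The ``$\Leftarrow$'' direction is exactly Theorem~\ref{new}. For ``$\Rightarrow$'', via Theorem~\ref{equiv} and Theorem~\ref{thm_eigenvalues} the question becomes: for odd $s>3$, does every integral solution of \eqref{eq_diophantine_vandam} with $\theta_1>\theta_2>\theta_3$ satisfy $\theta_2=0$ and $\theta_1=-\theta_3$? Your bialternant rewriting as $h_{s-2}(\theta_1,\theta_2,\theta_3)=0$ is correct, and the even-$s$ elimination is fine (it is in any case subsumed by Theorem~\ref{HRGnew}).

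One simplification: your subcase $\theta_1+\theta_2+\theta_3=0$ does not need the $e_3$-divisibility and discriminant analysis. If $\theta_1+\theta_2+\theta_3=0$ then $\Omega$ is already a $3$-sum set by Theorem~\ref{vanDamProp41} (equivalently the first item of Theorem~\ref{CWnew}); since by hypothesis $\Omega$ is also an $s$-sum set for some $s>3$, the contrapositive of Theorem~\ref{thm_w2} immediately gives $w_2=n(p-1)p^{m-2}$ and $w_1+w_3=2n(p-1)p^{m-2}$.

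The genuine obstacle is the one you name yourself: the case $\theta_1+\theta_2+\theta_3\neq 0$ with $s>3$ odd. Here you supply no argument, and neither does the paper---this is precisely why the statement is recorded as a conjecture rather than a theorem. Your final paragraph is an accurate assessment of where the problem stands, not a proof.
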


At least for the special case $p=m=2$, i.e., $\Z_4$-codes, Conjecture~\ref{conj_s_sum_set} seems to be true also for $s=3$, see Conjecture~\ref{conj_w2_n} in
Section~\ref{sec_z4_codes}. In \cite{Co92} it has been shown that all $s$-sum sets of a special type also have to be $3$-sum sets.

\section{$\Z_4$-codes}
\label{sec_z4_codes}
Here we consider $\mathbb{Z}_4$ linear codes and assume that the reader is familiar with \cite{HKCSS}. When we say that $C$ is an $[n,k]$ code, where $k=(k_1,k_2)$ is the \emph{shape},
this means that $C$ is a linear code over $\mathbb{Z}_4$ of effective length $n$, which admits a generator matrix of the form
\begin{equation}
  \label{eq_generatormatrix}
  G=\begin{pmatrix}
    I_{k_1} & A & B\\
    0 & 2I_{k_2} & 2D
  \end{pmatrix},
\end{equation}
where $I_{k_1}$ and $I_{k_2}$ denote the $k_1\times k_1$ and $k_2\times k_2$ identity matrices, respectively, $A$ and $D$ are $0,1$-matrices
($\mathbb{Z}_2$-matrices), and $B$ is a $\mathbb{Z}_4$ matrix. We assume that the stated generator matrix consists of $n$ columns (and $k_1+k_2$ rows),
so that the condition effective length $n$ is equivalent to the property that the generator matrix does not contain an all-zero vector $\mathbf{0}$ as a column.
The code size, i.e., the number of codewords, is given by $\# C=4^{k_1}\cdot 2^{k_2}= 2^{2k_1+k_2}$.
For $\Z_4$-codes we note that the homogeneous weight coincides with the Lee weight.

We  note the following compatibility between a $\Z_4$-code and its Gray image.
{\proposition\label{prop_gray_z4}If $C$ is a $\Z_4$-code with three Lee weights satisfying the hypotheses of Theorem \ref{CWnew}, then its binary image $C^G$ satisfies
the hypotheses of Theorem \ref{CWnew}.}
\begin{proof} As is well-known, the Hamming weights of $C^G$ are the Lee weights of $C$. If $n$ is the length of $C$ then $C^G$
is of length $2n$. The third condition of Theorem \ref{CWnew} becomes with $p=m=2$, $$w_1+w_2+w_3=3\frac{b+n(p-1)p^{m-1}}{p}=3\frac{b+2n}{2},$$
which is the same with $p=2, m=1$ and length $2n$.
\end{proof}

By $A_i$ we denote the number of codewords of $C$ of homogeneous weight $i$ and by $B_i$ we denote the number of codewords of homogeneous weight
$i$ of its dual code $C^\bot$. The cardinality of the dual code is $\# C^\bot=2^{2n-2k_1-k_2}$. Since we assume that
a corresponding generator matrix does not have a zero column, we have $B_1=0$ in our context.
As mentioned at the end of Subsection~\ref{subsec_homogeneous weight}, every $\Z_4$ code has minimum dual distance $d^\bot\ge 3$ iff it
is regular and projective. The numbers $A_i$ can be encoded in a weight enumerator of $C$:
\begin{equation}
  \operatorname{\mathrm{Hom}}_C(X,Y)=\sum_{c\in C} X^{2n-\whom(c)}Y^{\whom(c)}=\sum_{i=0}^{2n} A_iX^{2n-i}Y^i.
\end{equation}
The corresponding MacWilliams identity 
is given by $\operatorname{\mathrm{Hom}}_C(X,Y) =\tfrac{1}{\#C^\bot } \cdot\operatorname{\mathrm{Hom}}_{C^\bot}(X+Y,X-Y)$, see e.g.\cite{HKCSS}.
Note that this formula equals the Hamming weight enumerator for a linear code over $\mathbb{F}_2$ with cardinality $\# C=2^k$ if we replace $n$ by $2n$ and $k$ by
$2k_1+k_2$. As mentioned in the introduction and used in the proof of Lemma~\ref{prop_gray_z4}, the same conclusion can be obtained via the Gray map.
Thus, we can easily rewrite the classical \emph{MacWilliams identities} \cite{macwilliams1977theory} or the \emph{Pless power moments}
\cite{pless1963power} for the Hamming weight over $\F_2$. Using $A_0=B_0=1$ and $B_1=0$ the first four \emph{(Pless) power moments}  for the $\whom$
are given by:
\begin{eqnarray}
  \sum_{i>0} A_i &=& 2^{2k_1+k_2}-1,\label{eq_ppm1}\\
  \sum_{i\ge 0} iA_i &=& 2^{2k_1+k_2}n,\label{eq_ppm2}\\
  \sum_{i\ge 0} i^2A_i &=& 2^{2k_1+k_2-1}(B_2+n(2n+1)),\label{eq_ppm3}\\
  \sum_{i\ge 0} i^3A_i &=& 2^{2k_1+k_2-2}(3(B_2n-B_3)+2n^2(2n+3))\label{eq_ppm4}.
\end{eqnarray}
We remark, that the first three equations (including $B_1$) can also be found in \cite[Theorem 3.1]{SW}.

In this section we are interested in structural results for $l$-weight codes where $l$ is small. Mostly we will restrict our attention to projective codes only. In some
cases we will require additional constraints on the sum $\sum_{i=1}^l w_i$ of weights (that are motivated by strongly walk regular graphs, see e.g.\ \cite{DO}
and see also \cite{SS}, which studies the field case).

\subsection{Theoretical results for $\Z_4$-codes with few weights}
In this section we give only very few results that are of interest for the remaining part of this paper. For more information we refer the interested reader
to e.g.~\cite{SW,ShiXuYang2017}, where one-Lee weight codes and projective two-Lee weight codes are studied. For every pair $(k_1,k_2)$ of non-negative
integers there exists exactly one non-isomorphic $1$-weight $[n,(k_1,k_2)]$ code (for a unique length $n$ depending on $k_1$ and $k_2$), see \cite{C}.

\begin{lemma}\
  \label{lemma_even_weight_subcode}
  Let $C$ be an $[n,(k_1,k_2)]$ code. By $C_2$ we denote the subcode of $C$ spanned be the codewords of even weight. The cardinality of $C_2$ is either $2^{2k_1+k_2-1}$ or
  $2^{2k_1+k_2}$ and all codewords of $C_2$ have an even weight.
\end{lemma}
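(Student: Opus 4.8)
The plan is to consider the $\Z_2$-linear map $\phi\colon C\to\Z_2$ sending a codeword $c$ to $\whom(c)\bmod 2$, and to show it is a group homomorphism; then $C_2=\ker\phi$, which immediately gives the stated dichotomy on $|C_2|$ and the fact that every codeword of $C_2$ has even weight. The first thing I would do is record that $C_2$, as defined (the subcode spanned by the even-weight codewords), is automatically contained in the set of even-weight codewords once we know that set is a subgroup; so the real content is the homomorphism property. Since $C$ has order $2^{2k_1+k_2}$, its image under $\phi$ is either trivial or all of $\Z_2$, forcing $|C_2|\in\{2^{2k_1+k_2},2^{2k_1+k_2-1}\}$.

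To prove $\phi$ is additive, I would reduce to a coordinatewise statement. For $\Z_4$ the homogeneous weight equals the Lee weight, with $\whom(0)=0$, $\whom(1)=\whom(3)=1$, $\whom(2)=2$. So $\whom(x)\equiv 1\pmod 2$ exactly when $x$ is a unit (i.e. $x\in\{1,3\}$), and $\whom(x)\equiv 0\pmod 2$ when $x\in\{0,2\}$. Writing $\varepsilon(x)=\whom(x)\bmod 2$, this $\varepsilon\colon\Z_4\to\Z_2$ is precisely the reduction-mod-$2$ map $\Z_4\to\Z_4/2\Z_4\cong\Z_2$, hence a ring (in particular group) homomorphism. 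For $c=(c_1,\dots,c_n)\in C$ we get $\whom(c)=\sum_i\whom(c_i)$, so $\phi(c)=\sum_i\varepsilon(c_i)\in\Z_2$; since each $\varepsilon$ is additive and summation in $\Z_2$ is additive, $\phi$ is additive on $C$. This is the only step with any subtlety, and it is genuinely short: the key identity is simply $\whom(x)\equiv x\pmod 2$ for $x\in\Z_4$ in the obvious identification, which one checks on the four elements.

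With $\phi$ a homomorphism, $C_2=\ker\phi$ is a subgroup of index $1$ or $2$ in $C$, every element of which has $\whom\equiv 0\pmod 2$, i.e. even weight; and its order is $|C|$ or $|C|/2$, i.e. $2^{2k_1+k_2}$ or $2^{2k_1+k_2-1}$. The only remaining point is to reconcile this with the definition of $C_2$ as ``the subcode spanned by the even-weight codewords'': the even-weight codewords are exactly $\ker\phi$, which is already a subgroup, so the span it generates is $\ker\phi$ itself, matching the description above. I do not anticipate a real obstacle here; the ``hard part'', such as it is, is just the observation that the mod-$2$ reduction of the Lee weight is additive, which one could also phrase as: $C_2$ is the kernel of the composite $C\hookrightarrow\Z_4^n\twoheadrightarrow\Z_2^n\xrightarrow{\;\mathrm{wt}_{\mathrm H}\bmod 2\;}\Z_2$, the last map being the parity of the Hamming weight, which is well known to be additive on $\Z_2^n$.
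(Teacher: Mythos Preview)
Your proof is correct and follows essentially the same idea as the paper's: both arguments show that the set of even-Lee-weight codewords is already a submodule of $C$ of index at most $2$, so that $C_2$ coincides with it. The only difference is in framing: you exhibit the parity map $\phi(c)=\whom(c)\bmod 2$ directly as a homomorphism $C\to\Z_2$ (via $\whom(x)\equiv x\pmod 2$ on $\Z_4$), whereas the paper observes that the even-Lee-weight vectors in $\Z_4^n$ form precisely the dual of the code $\langle 2\cdot\mathbf{1}\rangle$, so $C_2=C\cap\langle 2\cdot\mathbf{1}\rangle^\bot$, and the two cases correspond to whether $2\cdot\mathbf{1}\in C^\bot$ or not. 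These are two phrasings of the same fact; the paper's version has the small bonus of naming the criterion distinguishing the two cardinalities, while yours makes the additivity of the parity map fully explicit.
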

\begin{proof}
  Let $E$ be the code consisting of codewords of $C$ of even Lee weight. This code is linear as the dual of the code spanned by the all-$2$ vector, so
  that $C_2=E\cap C$. Thus $C_2$ is linear and its size depends on whether $2\cdot {\bf 1}\in C^\bot$ or not.
\end{proof}
We also call $C_2$ the even-weight subcode of $C$.

\begin{lemma}
  \label{lemma_3wt_odd}
  Let $C$ be an $[n,(k_1,k_2)]$ code with $t\ge 2$ different nonzero weights. Then, at most $t-1$ weights can be odd.
\end{lemma}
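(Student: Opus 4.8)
The plan is to exploit the even-weight subcode $C_2$ introduced in Lemma~\ref{lemma_even_weight_subcode} together with a simple counting/pigeonhole argument on which weight classes can lie outside $C_2$. Write $w_1 < w_2 < \dots < w_t$ for the nonzero weights of $C$, and let $C_{\mathrm{odd}} = C \setminus C_2$ be the set of codewords of odd Lee weight. By Lemma~\ref{lemma_even_weight_subcode}, $C_2$ is a linear subcode of index $1$ or $2$ in $C$; in the first case every codeword has even weight, so no weight is odd and the statement holds trivially (indeed $0 \le t-1$). So I would assume the index is $2$, i.e.\ $C_{\mathrm{odd}}$ is a nonempty coset of $C_2$ and $|C_{\mathrm{odd}}| = |C_2| = 2^{2k_1+k_2-1}$.

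The key observation is that $C_{\mathrm{odd}} = c_0 + C_2$ for any fixed odd-weight codeword $c_0$, and that this coset is closed under the sign map $c \mapsto -c$ (since $-c_0 \in c_0 + C_2$ as $2c_0 \in C_2$, being of even — in fact zero — weight), as well as under addition of $2\cdot\mathbf{1}$ when $2\cdot\mathbf{1}\notin C^\bot$. More to the point: I would argue that the odd weights all occur already inside the coset $C_{\mathrm{odd}}$, and then bound how many distinct odd values a single coset of a linear code of this size can realize. A cleaner route: suppose for contradiction that $t$ weights $w_{i_1} < \dots < w_{i_t}$ are all odd, i.e.\ \emph{all} $t$ nonzero weights are odd. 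Pick codewords $c, c'$ of odd weights $w_i \ne w_j$. Their sum $c + c'$ has even Lee weight (sum of two odd integers), hence $c+c' \in C_2$, hence $c+c'$ is the zero word or has one of the weights $w_1,\dots,w_t$ — but all of those are odd, forcing $c + c' = 0$, i.e.\ $c' = -c$, i.e.\ $c$ and $c'$ lie in the same $\pm$-pair and so $w_i = w(c) = w(-c) = w(c') = w_j$, a contradiction. This shows that if every nonzero weight were odd, then $C_{\mathrm{odd}}$ would consist of a single $\pm$-pair $\{c_0, -c_0\}$, forcing $|C_2| = |C_{\mathrm{odd}}| \le 2$ and hence $C$ to have at most $t \le 2$ nonzero weights, all odd. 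But $t = 2$ with both weights odd is impossible by the same argument (the sum of the two weight classes would have even weight and lie in $C_2 = \{c_0, -c_0\}$, again collapsing the two classes), unless in fact $t \le 1$; reconciling this with the hypothesis $t \ge 2$ gives the contradiction, so at most $t-1$ of the weights are odd.

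I would streamline the write-up as follows: first reduce to the index-$2$ case; then show the map that adds a fixed odd-weight codeword $c_0$ sends even-weight codewords to odd-weight ones bijectively, so $C_{\mathrm{odd}} = c_0 + C_2$; then prove that two odd-weight codewords of \emph{different} weight cannot exist by noting their sum lands in $C_2$ and must be nonzero (different weights $\Rightarrow$ the codewords are not negatives of one another) yet has even weight, contradicting that $C_2$'s nonzero weights — being among $w_1,\dots,w_t$ — would all have to be odd if all $t$ were odd; conclude that at most one nonzero weight is odd unless some even nonzero weight exists, i.e.\ at most $t-1$ are odd. The main obstacle I anticipate is bookkeeping the edge cases cleanly — in particular making sure the argument "the sum has even weight so it lies in $C_2$ so it has an odd weight or is zero" is applied consistently, and handling the possibility that $C$ itself equals $C_2$ — but none of this is deep; it is purely the parity-of-integer-sum fact plus linearity of $C_2$.
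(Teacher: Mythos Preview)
Your argument is correct, but it is considerably more elaborate than what the paper does. The paper's proof is a single sentence: if $c\in C$ has odd Lee weight, then $2c\in C$ has even Lee weight (every coordinate of $2c$ lies in $\{0,2\}$) and is nonzero (if $2c=0$ then $c\in\{0,2\}^n$, forcing $\whom(c)$ even). This immediately exhibits a nonzero even weight among the $t$ weights, so at most $t-1$ of them are odd. No appeal to $C_2$, cosets, or the hypothesis $t\ge 2$ is needed beyond the trivial observation that one even weight leaves at most $t-1$ odd ones.

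Your route---invoke Lemma~\ref{lemma_even_weight_subcode}, assume all $t$ weights are odd, pick $c,c'$ of different odd weights, observe $c+c'\in C_2$ is nonzero (since $c'\neq -c$) with even weight, contradiction---is valid and in fact closely related: you are producing the same ``nonzero even-weight witness'' via $c+c'$ that the paper produces via $c+c=2c$. The paper's choice $c'=c$ is simply more economical, since it needs only a single odd-weight codeword and sidesteps the coset discussion entirely. The portion of your proposal about $C_{\mathrm{odd}}$ collapsing to a single $\pm$-pair and the ensuing case analysis on $|C_2|\le 2$ is unnecessary (and a bit tangled): the contradiction is already complete the moment you produce a nonzero codeword in $C_2$ under the assumption that all nonzero weights are odd.
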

\begin{proof}
  Let $c\in C$ be a codeword of odd weight, then $2c\in C$ is a codeword of even weight.
\end{proof}

\begin{lemma}
  \label{lemma_3wt_parametric}
  Let $C$ be an 
  $[n,(k_1,k_2)]$ three-weight code with $d^\bot\ge 3$ and weights $w_1$, $w_2$, and $w_3$ that occur $A_1$, $A_2$,
  and $A_3$ times, respectively. Using the abbreviation $y=2^{2k_1+k_2-1}$ we have
  \begin{eqnarray}
    A_1 &=& \frac{y\cdot\left(2n^2 - 2nw_2 - 2nw_3 + 2w_2w_3 + n\right) - w_2w_3}{(w_2-w_1)(w_3-w_1)} \\
    A_2 &=& \frac{y\cdot\left(2n^2 - 2nw_1 - 2nw_3 + 2w_1w_3 + n\right) - w_1w_3}{(w_2-w_3)(w_2-w_1)} \label{eq_A2}\\
    A_3 &=& \frac{y\cdot\left(2n^2 - 2nw_1 - 2nw_2 + 2w_1w_2 + n\right) - w_1w_2}{(w_3-w_1)(w_3-w_2)}
  \end{eqnarray}
  and
  \begin{eqnarray}
    3B_3 &=& 2n^2(2n+3)-\left(w_1+w_2+w_3\right)2n(2n+1)-4w_1w_2w_3\nonumber\\
        & & +2\left(w_1w_2+w_1w_3+w_2w_3\right)2n+\frac{2w_1w_2w_3}{y}\nonumber
  \end{eqnarray}
  for the number of dual codewords of weight $3$.
\end{lemma}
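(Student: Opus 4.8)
The plan is to read the four Pless power moments \eqref{eq_ppm1}--\eqref{eq_ppm4} as a single linear system for the unknowns $A_1,A_2,A_3,B_3$. First I would record the input data: since $C$ has $d^\bot\ge 3$ (equivalently, $C$ is regular and projective), the dual code $C^\bot$ has no nonzero codeword of homogeneous weight $1$ or $2$, so $B_1=B_2=0$, and of course $A_0=B_0=1$ while the only homogeneous weights occurring among nonzero codewords are $w_1,w_2,w_3$. Writing $y=2^{2k_1+k_2-1}$ (so that $\#C=2y$), the first three power moments \eqref{eq_ppm1}--\eqref{eq_ppm3} collapse to the system
\begin{align*}
A_1+A_2+A_3&=2y-1,\\
w_1A_1+w_2A_2+w_3A_3&=2yn,\\
w_1^2A_1+w_2^2A_2+w_3^2A_3&=yn(2n+1),
\end{align*}
whose coefficient matrix is a $3\times 3$ Vandermonde matrix in $w_1,w_2,w_3$, hence invertible with determinant $(w_2-w_1)(w_3-w_1)(w_3-w_2)\neq 0$ since $w_1<w_2<w_3$.

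Next I would solve for $A_1,A_2,A_3$ by Cramer's rule. Expanding, for instance, the numerator determinant for $A_1$ along its first column lets one pull out the common factor $w_3-w_2$, which cancels against the corresponding factor of the Vandermonde determinant, leaving
\[
A_1=\frac{(2y-1)w_2w_3-2yn(w_2+w_3)+yn(2n+1)}{(w_2-w_1)(w_3-w_1)};
\]
regrouping the numerator as $y\bigl(2n^2-2nw_2-2nw_3+2w_2w_3+n\bigr)-w_2w_3$ gives exactly the stated formula, and the expressions for $A_2$ and $A_3$ follow by the same computation (with the sign in the denominator accounted for by $w_2-w_3<0$). The only thing to be careful about here is the bookkeeping when extracting the linear factor and the signs in the three denominators.

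For the count $3B_3$ I would avoid substituting the closed forms for the $A_i$ back in. Let $\sigma_1=w_1+w_2+w_3$, $\sigma_2=w_1w_2+w_1w_3+w_2w_3$, and $\sigma_3=w_1w_2w_3$ be the elementary symmetric functions of the three weights. Since each $w_i$ is a root of $t^3-\sigma_1t^2+\sigma_2t-\sigma_3$, we have $w_i^3=\sigma_1w_i^2-\sigma_2w_i+\sigma_3$, and multiplying by $A_i$ and summing over $i=1,2,3$ yields
\[
\sum_i w_i^3A_i=\sigma_1\Bigl(\sum_i w_i^2A_i\Bigr)-\sigma_2\Bigl(\sum_i w_iA_i\Bigr)+\sigma_3\Bigl(\sum_i A_i\Bigr)=\sigma_1\,yn(2n+1)-2\sigma_2\,yn+\sigma_3(2y-1),
\]
using the three moment equations above. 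On the other hand \eqref{eq_ppm4} with $B_2=0$ reads $\sum_i w_i^3A_i=\tfrac{y}{2}\bigl(2n^2(2n+3)-3B_3\bigr)$; equating the two expressions, solving for $3B_3$, and rewriting $\sigma_1,\sigma_2,\sigma_3$ back in terms of the weights produces precisely the claimed identity, the term $\tfrac{2w_1w_2w_3}{y}$ arising from the $2y-1$ versus $2y$ discrepancy in the constant term. I do not expect a genuine obstacle: the whole argument is a finite linear-algebra computation, and the symmetric-function reduction of $\sum_i w_i^3 A_i$ is what prevents the final step from degenerating into a long polynomial manipulation.
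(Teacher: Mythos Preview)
Your proof is correct and follows essentially the same approach as the paper: solve the first three power moments \eqref{eq_ppm1}--\eqref{eq_ppm3} (with $B_2=0$ from $d^\bot\ge 3$) for $A_1,A_2,A_3$, then use the fourth moment \eqref{eq_ppm4} to extract $3B_3$. Your use of the Newton-type identity $w_i^3=\sigma_1w_i^2-\sigma_2w_i+\sigma_3$ to compute $\sum_i w_i^3A_i$ directly from the lower moments is a tidy shortcut compared to the paper's ``plugging in'' of the explicit $A_i$, but it is a computational convenience rather than a different argument.
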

\begin{proof}
  Solving the first three power moment equations (\ref{eq_ppm1})-(\ref{eq_ppm3}) for $A_1$, $A_2$, and $A_3$ gives the first statement. Plugging into the fourth
  power moment equation (\ref{eq_ppm4}) and solving for $3B_3$ gives the last equation of the statement.
\end{proof}
We remark that the integrality of $B_3$ implies that the product of the three weights $w_1w_2w_3$ is divisible by $y/2=2^{2k_1-k_2-2}$.
Now let us look at the first case of Theorem~\ref{CWnew}, which is $w_1+w_2+w_3=3n$ in our situation. Our first observation is
that the length $n$ has to be even.

\begin{lemma}
  \label{lemma_n_divisible_by_2}
  Let $C$ be an 
  $[n,(k_1,k_2)]$ $3$-weight code with $d^\bot\ge 3$ and weights $w_1$, $w_2$, and $w_3$ satisfying $w_1+w_2+w_3=3n$. Then, $n\equiv 0\pmod 2$.
\end{lemma}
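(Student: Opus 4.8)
The plan is to count how many of $w_1,w_2,w_3$ are odd. By Lemma~\ref{lemma_3wt_odd} at most two of them are odd, so the number of odd weights is $0$, $1$, or $2$. If it is $0$ or $2$, then $w_1+w_2+w_3$ is even, hence $3n$ is even, hence $n$ is even, and we are done. The entire content of the lemma is therefore to rule out the case that exactly one weight, say $w_o$, is odd; in that case $3n=w_1+w_2+w_3$ is odd, so $n$ would be odd, and I argue towards a contradiction.

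First I would pin down $w_o$. Because $d^\bot\ge 3$, the code $C$ is regular, so along every coordinate the evaluation $c\mapsto c_j$ maps $C$ onto $\Z_4$; thus every coordinate contributes exactly $|C|$ to $\sum_{c\in C}\whom(c)$, giving $\sum_{c\in C}\whom(c)=n|C|$ (this is also just the first power moment \eqref{eq_ppm2}). Since $C$ has a codeword of odd weight, the even-weight subcode $C_2$ of Lemma~\ref{lemma_even_weight_subcode} has index $2$ in $C$; an index-$2$ subgroup cannot lie inside the index-$4$ kernel of any coordinate evaluation of $C$, so every coordinate is still nonzero on $C_2$, and along each such coordinate the contribution to $\sum_{c\in C_2}\whom(c)$ is again exactly $|C_2|=\tfrac12|C|$, whence $\sum_{c\in C_2}\whom(c)=\tfrac12 n|C|$. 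Subtracting, the $\tfrac12|C|$ codewords of $C\setminus C_2$, all of which have weight $w_o$, contribute $\tfrac12 n|C|$, so $w_o=n$. As $w_1<w_2<w_3$ and $w_1+w_2+w_3=3n$, the only index at which a weight can equal $n$ is the middle one, so $w_o=w_2=n$, $w_1+w_3=2n$ with $w_1,w_3$ even and $n$ odd, and $A_2=\tfrac12|C|=2^{2k_1+k_2-1}=:y$.

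Now I would substitute $A_2=y$ into \eqref{eq_A2} of Lemma~\ref{lemma_3wt_parametric}; using $w_2=n$ and $w_1+w_3=2n$ this collapses to $w_1w_3(y-1)=yn(n-1)$, so $w_1,w_3=n\mp d$ with $d^2=\tfrac{n(y-n)}{y-1}$. Here $d$ is an odd positive integer (since $w_1,w_3$ are even, $n$ odd), and solving the first two power moments for $A_1,A_3$ gives $A_3-A_1=n/d$, forcing $d\mid n$; writing $n=de$ and feeding this back into $d^2(y-1)=n(y-n)$ yields $y(e-d)=d(e^2-1)$, so (as $d$ is odd and $y$ is a power of $2$) $d\mid e$, say $e=dg$ with $g\ge 3$ odd, and then $y=\tfrac{d^2g^2-1}{g-1}$. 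The hard part is the final step: showing that $\tfrac{d^2g^2-1}{g-1}$ can be a power of $2$ only when $d=1$, and that $d=1$ is itself impossible. If $d=1$ then $y=g+1$, $n=e=g$, so $A_1=\tfrac12(y-1-e)=0$, contradicting that $C$ has three \emph{distinct} nonzero weights. For $d\ge 3$ one rewrites $\tfrac{d^2g^2-1}{g-1}=qg^2+g+1$ with $q=\tfrac{d^2-1}{g-1}\in\Z_{\ge 1}$, so a power-of-$2$ value forces $g\mid 2^j-1$, $2^j\equiv g+1\pmod{g^2}$, and $g^2(d^2-1)=(g-1)(2^j-g-1)$; combining the size estimate $2^j=qg^2+g+1>g^2$ with the requirement that the odd part of $(dg-1)(dg+1)=d^2g^2-1$ equal that of $g-1<g$ drives $v_2(d^2g^2-1)\ge 5$ and leaves no admissible pair $(d,g)$. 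This exclusion — elementary but somewhat delicate $2$-adic and divisibility bookkeeping — is the only real obstacle; everything before it is moment counting together with the regularity forced by $d^\bot\ge 3$. Once it is in place, the case of exactly one odd weight cannot occur, and $n$ is even.
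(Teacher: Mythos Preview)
Your overall architecture matches the paper's: assume $n$ odd, use Lemma~\ref{lemma_3wt_odd} and the even-weight subcode to force the single odd weight to be $w_2=n$ with $A_2=y=2^{2k_1+k_2-1}$, then plug into Lemma~\ref{lemma_3wt_parametric} to obtain a Diophantine constraint $(y-1)d^2=n(y-n)$ (your $d$ is the paper's $t$) and derive a contradiction. Your derivation of $w_2=n$ is in fact cleaner than the paper's: you observe that $C_2$, having index~$2$ in $C$, cannot sit inside any index-$4$ coordinate kernel, so no column of $C_2$ vanishes and the subtraction of power moments gives $w_o=n$ directly; the paper instead allows an effective length $n'\in\{n-1,n\}$ and rules out $n'=n-1$ by parity afterwards.

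The genuine gap is your last paragraph. After reaching $y(g-1)=d^2g^2-1$ with $d,g\ge 3$ odd and $y=2^j$, you correctly note that the odd part of $(dg-1)(dg+1)$ must equal the odd part of $g-1$, but then you stop at ``drives $v_2(d^2g^2-1)\ge 5$ and leaves no admissible pair $(d,g)$'' without an argument. That assertion is not a proof; the bound $v_2\ge 5$ by itself yields nothing. The step you are missing is short, and in fact simpler than the paper's case analysis on $z=s\,2^r\pm t$: since $dg$ is odd, exactly one of $dg\pm 1$ has $2$-adic valuation $1$; say $dg-\varepsilon=2P$ with $P$ odd and $\varepsilon\in\{\pm 1\}$. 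Then $P$ divides the odd part of $(dg-1)(dg+1)=2^j(g-1)$, which is the odd part of $g-1$ and hence at most $(g-1)/2$. But $P=(dg-\varepsilon)/2\ge(3g-1)/2>(g-1)/2$, a contradiction. This one-line odd-part comparison replaces the paper's longer Diophantine manipulation; your parameterisation $n=d^2g$ is what makes it available, so your route is ultimately slicker once the endgame is written out.
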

\begin{proof}
  Assume that $n$ is odd. Then, also $w_1+w_2+w_3=3n$ is odd. Using Lemma~\ref{lemma_3wt_odd} we conclude that exactly one weight -- say $w_2$ -- is odd.
  We consider the even weight subcode $C_2$ of $C$. Due to Lemma~\ref{lemma_even_weight_subcode} $C_2$ only contains codewords of even weight and has
  cardinality $\#C_2=2^{2k_1+k_2-1}$, i.e., $A_2=2^{2k_1+k_2-1}$. Since $C$ is projective $C_2$ has effective length $n'\in\{n-1,n\}$.
  Equation~(\ref{eq_ppm2}) gives
  $$
    w_1A_1+w_2A_2+w_3A_3=2^{2k_1+k_2}n
  $$
  for $C$ and
  $$
    w_1A_1+w_3A_3=2^{2k_1+k_2-1}n'
  $$
  for $C_2$. Taking the difference and dividing by $A_2=2^{2k_1+k_2-1}$ yields $w_2=n+\left(n-n'\right)$. Since $w_2,n$ are odd and $n-n'\in\{0,1\}$, we
  have $n'=n$ and $w_2=n$. So, let us write $w_1=n-t$ and $w_3=n+t$ for some odd integer $t$ (observe that $w_1$ and $w_3$ have to be even).
  With this, Equation~(\ref{eq_A2}) and $A_2=2^{2k_1+k_2-1}=y$ gives
  $(y-1)t^2=(y-n)n$.
  Note that $t=1$ gives $y=n+1$ or $n=1$. In the first case we have $A_1=0$ and $w_1=0$ in the second, so that we can assume $t\ge 3$.
  Solving the above equations for $n$ gives
  $$
    n=\frac{y}{2}\pm \frac{\sqrt{-4t^2y + 4t^2 + y^2}}{2}.
  $$
  So, we consider the diophantine equation
  \begin{equation}
    x^2=-4t^2y + 4t^2 + y^2\label{eq_diophant_1},
  \end{equation}
  where $y$ is a power of $2$ and all integers are positive.\footnote{Note that Equation~(\ref{eq_diophant_1}) has infinitely many
  solutions for $t\in\{-1,0,1\}$. Also if $y$ is not a power of $2$ there are lots of solutions, e.g.
  $
    (x,t,y)\in\left\{(1,2,15),(11,2,21),(14,3,40),(17,6,145)\right\}
  $.}
  In our situation we have $t\ge 3$, so that $y\ge 64=2^6$, since $x^2$ would be negative for $t\ge 3$ and $y\le 32$. Now let us set $x=2z$ and $y=2^{r+1}$, i.e.,
  $r\ge 5$. With this we can rewrite Equation~(\ref{eq_diophant_1}) to
  \begin{equation}
    (z-t)(z+t)=2^{2r}-2^{r+1}t^2\label{eq_diophant_2}.
  \end{equation}
  Since $t$ is odd, $z$ has to be odd too. We conclude that either $z-t$ or $z+t$ has to be divisible by $2^r$ from the fact that
  the greatest common divisor of $z-t$ and $z+t$ divides $2t$ and $r\ge 5$.

  Assume that $z=s2^r-t$ for some nonzero integer $s$ (this is the case that $2^r$ divides z+t). Plugging in into Equation~(\ref{eq_diophant_2}), dividing
  by $2^{r+1}$, and simplifying gives $\left(2^{r-1}s-t\right)\cdot s=2^{r-1}-t^2$, so that
  $$
    2^{r-1}\cdot\left(s^2-1\right) = t\cdot (s-t).
  $$
  Note that $s^2\ge 1$. If $s^2=1$, then $s=t$ or $t=0$, which contradicts $t\ge 3$. Thus, the left hand side is positive and $s>t\ge 3$.
  This implies $t(s-t)\le s(s-t)<2^{r-1}(s^2-1)$ -- contradiction.

  Assume that $z=s2^r+t$ for some nonzero integer $s$ (this is the case that $2^r$ divides z-t). Plugging in into Equation~(\ref{eq_diophant_2}), dividing by
  $2^{r+1}$, and simplifying gives $\left(2^{r-1}s+t\right)\cdot s=2^{r-1}-t^2$, so that
  $$
    2^{r-1}\cdot\left(s^2-1\right) = t\cdot (-s-t).
  $$
  Note that $s^2\ge 1$. If $s^2=1$, then $s=-t$ or $t=0$, which contradicts $t\ge 3$. Thus, the left hand side is positive and $-s>t\ge 3$.
  This implies $t(s-t)\le s(s-t)<2^{r-1}(s^2-1)$ -- contradiction.
\end{proof}

We remark that in the case where also $C_2$ is projective the proof can be shortened significantly since \cite[Corollary 16]{byrne2012properties}
implies that $2t$ is a power of two. An example where Lemma~\ref{lemma_n_divisible_by_2} excludes parameters of a code is given by $n=29$, $2k_1+k_2=8$,
$w=(24,31,32)$, and weight distribution $0^1 24^{76} 31^{128} 32^{51}$. Applying the MacWilliams transform gives $B_0=1$, $B_1=B_2=0$, $B_3=164$, and indeed all
$B_i$ are non-negative integers. Moreover, the number of even-weight codewords is $128$, i.e., half the size of the code.

Next we show that under the assumption of Theorem~\ref{thm_w2} the length and the three weights all are divisible by large powers of $2$
if the length is suitably large.
\begin{proposition}
  \label{prop_divisibility_n}
  Let $C$ be an 
  $[n,(k_1,k_2)]$ $3$-weight code with $d^\bot\ge 3$ and weights satisfying $w_1+w_2+w_3=3n$ and $w_2=n$. For each positive integer $r$ there exists an integer $N(r)$ such
  that $n\ge N(r)$ implies that $2^r$ divides $n$ and $2^{r-1}$ divides all three weights $w_1,w_2,w_3$.
\end{proposition}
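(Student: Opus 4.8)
The plan is to turn the hypotheses into a rigid Diophantine problem and then run a $2$-adic valuation argument. Since $w_1+w_2+w_3=3n$ together with $w_2=n$ forces $w_1=n-t$, $w_3=n+t$ for an integer $1\le t<n$, Lemma~\ref{lemma_3wt_parametric} specializes to remarkably clean formulas: writing $y=2^{2k_1+k_2-1}$, one gets $A_3-A_1=n/t$, $A_1+A_3=\tfrac{n(y-n)}{t^2}$, and (after the polynomial part of the fourth power moment cancels) $3B_3=\tfrac{2n(n^2-t^2)}{y}$. From $A_3-A_1\in\Z$ I would first deduce $t\mid n$; writing $u=n/t\ge 2$ and using that $y$ is a power of $2$, integrality of $A_1+A_3$ then gives that the odd part $t'$ of $t$ divides $u$. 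Setting $a=\nu_2(t)$ and $u=t'v$ puts everything in the normal form $n=2^at'^2v$, $t=2^at'$, and it suffices to prove that $a\ge r$ once $n$ is large, since that already yields $2^r\mid 2^a\mid n$ and $2^{r-1}\mid 2^a\mid t$, hence $2^{r-1}\mid w_1,w_2,w_3$.

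Next I would record the inequalities and valuation bounds that a genuine three-weight code with $d^\bot\ge 3$ must satisfy. With $c=2k_1+k_2-1$ (so $y=2^c$), $M=2^{c-a}$, $T=t'^2v$ (so $n=2^aT$) and $\delta=M-T$: (a) $A_1\ge 1$ forces $y>n+t$, in particular $c>\log_2 n$; (b) $A_1\ge 1$ also forces $\delta\ge t'+1>0$, so $T<M$; (c) $A_2\ge 1$ gives $T(M-T)<2^{c+1}t'^2$; (d) $3B_3=2^{3a+1-c}T(T^2-t'^2)$ together with $B_3\in\Z_{>0}$ gives $\nu_2(T^2-t'^2)\ge c-3a-1$.

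The main work is a case split. If $c\le 3a+2$, then by (a) $a\ge\tfrac{c-2}{3}>\tfrac{\log_2 n-2}{3}$, so $a$ is automatically large. Otherwise $c\ge 3a+3$, and I split on the parity of $v$. If $v$ is even, then $T$ is even and $T^2-t'^2$ is odd, so (d) reads $\nu_2(v)\ge c-3a-1$; combined with $T<M$ this forces $t'\le 2^a$, and then (b) and (c) (after ruling out the alternative $\delta>M/2$, which once more makes $a$ large) force $\delta<2^{3a+2}$ while $\nu_2(\delta)=\nu_2(v)\ge c-3a-1$, so $2^{c-3a-1}\mid\delta$ with $0<\delta<2^{c-3a-1}$ for $n$ large — a contradiction. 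If $v$ is odd, then $T$ is odd and (d) reads $\nu_2(t'v-1)+\nu_2(t'v+1)\ge c-3a-1$; since the left-hand side is bounded by roughly $\log_2(t'v)$ while $\log_2 n=a+2\log_2 t'+\log_2 v$ and $c>\log_2 n$, this pins down $t'\le 2^{2a+3}$; then (b) and (c) force $\delta$ bounded, and since $2^{c-3a-2}$ divides $M$ and hence one of $\delta\pm t'$ while $0<\delta\pm t'<2^{c-3a-2}$ for $n$ large, one obtains $\delta=t'$, contradicting $\delta\ge t'+1$. In every branch where no contradiction arises, the surviving inequalities among $c$, $a$ and $\log_2 n$ force $a>\tfrac18\log_2 n-O(1)$; hence taking $N(r)=2^{8r+O(1)}$ gives $a\ge r$ whenever $n\ge N(r)$, which is exactly what is needed.

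The main obstacle is step (d) together with the size estimates: a priori $\nu_2(T^2-t'^2)$ — equivalently $\nu_2(T\pm t')$, or $\nu_2(\delta)$ in the even case — can be arbitrarily large, so it is not clear at all that $n$ is constrained. What makes the argument close is that the lower bound $\nu_2(T^2-t'^2)\ge c-3a-1$ is, through $c>\log_2 n$, almost as large as $\log_2 n$ itself, so it collides head-on with the upper bounds on $\delta$ and on $t'$ (polynomially small in $2^a$) extracted from $A_2\ge 1$ and $T<M$. Arranging the chain of inequalities so that it genuinely closes, instead of degenerating into something vacuous such as $0<a+1$, is the delicate point; this is precisely what forces $t'$ and $\delta$ to be bounded in each branch and therefore forces $a$ to grow with $n$.
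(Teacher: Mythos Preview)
Your plan is correct and follows essentially the same route as the paper: specialize Lemma~\ref{lemma_3wt_parametric} to $(w_1,w_2,w_3)=(n-t,n,n+t)$, extract $t\mid n$ and $(\text{odd part of }t)^2\mid n$ from the integrality of the $A_i$, and then run a $2$-adic case analysis that combines the integrality of $B_3$ with the positivity of $A_1,A_2$ to bound $k$ (equivalently your $c$) linearly in $a=\nu_2(t)$, arriving at $a\ge (k-O(1))/8$. The only cosmetic difference is that the paper pulls out $x=\nu_2(n)$ as a separate parameter and splits on the sign of $k-x-2u-2$, whereas you keep the extra $2$-power inside $v$ and split on its parity; your ``$v$ even'' branch and the paper's Case~2 use slightly different inequalities (you exploit $\nu_2(\delta)\ge c-3a-1$, the paper uses $y-n\ge 2^x$ directly), but both close to the same bound.
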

\begin{proof}
  Let $t$ be a positive integer with $w_1=n-t$ and $w_3=n+t$. With this (and $y=2^{2k_1+k_2-1}$) the equations of Lemma~\ref{lemma_3wt_parametric} are equivalent to
  \begin{eqnarray}
    A_1 &=& \frac{n(y - n - t)}{2t^2} \\
    A_2 &=& 
            \frac{t^2(2y-1)-n(y-n)}{t^2}\label{eq_A2_w_2_n}\\
    A_3 &=& \frac{n(y - n +t)}{2t^2}
  \end{eqnarray}
  and
  \begin{eqnarray}
    3B_3&=&  \frac{2n(n-t)(n+t)}{y}.
  \end{eqnarray}
  Since $A_3-A_1=\tfrac{n}{t}$ the effective length $n$ has to be divisible by $t$. From $A_2\in\mathbb{N}$ we conclude that $t^2$ divides $n(n-y)$. So, if
  $p^l$ divides $t$ for some odd prime $p$, then $p^{2l}$ has to divide $n$ since $y$ is a power of $2$. As an abbreviation we set $k=2k_1+k_2$. Now let us
  try to parameterize $t=2^u\cdot v$ and $n=2^x\cdot v^2\cdot z$ for odd positive integers $v,z$ and non-negative integers $u$, $x$. Plugging in and simplifying gives
\begin{eqnarray}
    A_1 &=& \frac{z\cdot (2^{k-u-1}-2^{x-u} v^2z -v)}{2^{u+1-x}} \label{eqs1}\\
    A_2 &=& 2^{2(x-u)}v^2z^2 + 2^{k} - 2^{x+k-2u-1}z - 1 \label{eqs2}\\
    A_3 &=& \frac{z\cdot (2^{k-u-1}-2^{x-u}v^2z +v)}{2^{u+1-x}} \label{eqs3}\\
    3B_3&=&  \frac{v^4z\cdot \left(2^{x-u}vz - 1\right)\cdot\left(2^{x-u}vz + 1\right)}{2^{k-x-2u-2}}\label{eqs4},
\end{eqnarray}
where $u\le x$ ($t$ divides $n$) and $x\le k-1$ ($n\le 2^k-1$ since $C$ is projective).

If $k-x-2u-2\ge 1$ then $B_3\in \mathbb{N}$ and $v,z\equiv 1\pmod 2$ imply $u=x$. Since $\gcd(vz-1,vz+1)=2$, we have that $2^{k-3u-3}$ either divides
$vz-1$ or $vz+1$. So, we use the parameterization $vz=s\cdot \left(2^{k-3u-3}\right)+\alpha$ for some positive integer $s$ and $\alpha\in\{-1,1\}$. With this $A_1>0$ gives
$$
2^{k-u-1}-v\left(s\cdot \left(2^{k-3u-3}\right)+\alpha+1\right)>0,
$$
so that $vs<2^{2u+2}$, i.e., $sv\le 2^{2u+2}-1$. Now $A_2>0$ gives $v^2z^2 + 2^{k} > 2^{k-u-1}z$, which is equivalent to
\begin{equation}
  sv(vz)^2 + 2^{k}sv > s2^{k-u-1}vz= s^22^{2k-4u-4}+\alpha s2^{k-u-1}>s^22^{2k-4u-4}-s2^{k+2u+2}.
\end{equation}
Since $sv\le 2^{2u+2}-1$ the left hand side is at most
\begin{eqnarray*}
  && s^2 2^{2k-4u-4} - s^2 2^{2k-6u-6} + s\alpha 2^{k-u} - s\alpha 2^{k-3u-2} \\
  &&+ \alpha^2 2^{2u+2} - \alpha^2 + 2^{k+2u+2} - 2^k\\
  &\le& s^2 2^{2k-4u-4} -s^2 2^{2k-6u-6}  +2s 2^{k-u} + 2^{2u+2} +2^{k+2u+2} \\
  &\le & s^2 2^{2k-4u-4} -s^2 2^{2k-6u-6}+3s 2^{k+2u+2}.
\end{eqnarray*}
Thus
\begin{equation}
  4s\cdot2^{k+2u+2} > s^22^{2k-6u-6}
\end{equation}
has to be satisfied, so that $k\le 8u+9$ and
\begin{equation}
  x=u\ge \frac{k-9}{8}.
\end{equation}

 Otherwise we have $k\le x+2u+2$. From $A_1>0$ we conclude $y-n>0$. Since $y-n$ is an integer and both $y$ and $n$ are divisible by $2^x$ we have $y-n\ge 2^x$. Now
 $A_2>0$, Equation~(\ref{eq_A2_w_2_n}), and $2y-1\le 2^k$ imply
 $$
   v^22^{2u}\cdot 2^k-2^xv^2z\cdot 2^x>0,
 $$
 so that $k+2u>2x$, i.e.,
 \begin{equation}
   k\ge 2x-2u+1.
 \end{equation}
 Combined with $k\le  x+2u+2$ we obtain
 $x\le 4u+1$ and
 $k\le 6u+3$, i.e.,
 \begin{equation}
   x\ge u\ge \frac{k-3}{6}.
 \end{equation}

 In both cases we can conclude $x\ge u\ge \tfrac{k-9}{8}$, so that the result follows from $n<2^k$.
\end{proof}

\begin{conjecture}
  \label{conj_w2_n}
  Let $C$ be an $[n,(k_1,k_2)]$ $3$-weight code with $d^\bot\ge 3$ and weights satisfying $w_1+w_2+w_3=3n$ and $w_1<w_2<w_3$. Then, $w_2=n$.
\end{conjecture}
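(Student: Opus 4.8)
\noindent\emph{Towards a proof.} The plan is to argue by contradiction, pushing the hypothesis $w_2\neq n$ through the parametric description of a three-weight $\Z_4$-code. Since $w_1+w_2+w_3=3n$ we may write $w_1=n-a$, $w_2=n+c$, $w_3=n+a-c$ with $c\neq 0$; the chain $w_1<w_2<w_3$ together with $0<w_1$ and $w_3\le 2n$ then forces $0<a<n$ and $a>\max(2c,-c)$. By Lemma~\ref{lemma_n_divisible_by_2} the length $n$ is even, and by Lemma~\ref{lemma_3wt_odd} at most two of the three weights are odd; as their sum is even, either all three are even (so $a$ and $c$ are both even) or exactly two are odd (so exactly one of $a,c$ is odd). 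These parity cases are the entry point for the $2$-adic part of the argument.

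Next I would substitute this parametrisation into the formulas of Lemma~\ref{lemma_3wt_parametric} and simplify with $w_1+w_2+w_3=3n$. Writing $y=2^{2k_1+k_2-1}$ (a power of $2$), one obtains
$$
A_1=\frac{n(y-n-a)+c(a-c)(2y-1)}{(a+c)(2a-c)},\qquad
A_3=\frac{n(y-n+a-c)-ac(2y-1)}{(2a-c)(a-2c)},
$$
$A_2=2y-1-A_1-A_3$, and the compact identity
$$
3B_3 = 4ac(a-c)+\frac{2w_1w_2w_3}{y}.
$$
Here $A_1,A_2,A_3$ are positive integers, $B_3$ is a non-negative integer, and $n\le 2^{2k_1+k_2}-1$ by projectivity (as used in the proof of Proposition~\ref{prop_divisibility_n}). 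Note that the specialisation $c=0$ recovers exactly the relations exploited in Proposition~\ref{prop_divisibility_n}, the already-settled core of the conjecture.

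The strategy is then to play two kinds of information against each other, just as in the proofs of Lemma~\ref{lemma_n_divisible_by_2} and Proposition~\ref{prop_divisibility_n}, but now carrying along the extra parameter $c$. On the \emph{archimedean} side, positivity of $A_1,A_2,A_3$ together with $A_1+A_2+A_3=2y-1$ and $n\le 2^{2k_1+k_2}-1$ confines $a$ and $|c|$ to a region small relative to $n$; when $c<0$ one has $ac(a-c)<0$, so non-negativity of $B_3$ forces $\frac{2w_1w_2w_3}{y}\ge 4a|c|(a+|c|)$, a genuine upper bound on the $2$-adic valuation of $w_1w_2w_3$ in terms of $2k_1+k_2$. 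On the \emph{$2$-adic} side, since $2y-1$ is odd and $n$ is even, clearing denominators in the formulas for $A_1$ and $A_3$ and reducing modulo successive powers of $2$ forces the $2$-adic valuations of $n$, $a$ and $c$ to grow with $n$; dividing $n,a,c$ (hence $w_1,w_2,w_3$) through by the common power of $2$ so obtained and iterating should reduce to the case where $c$ is odd or $c=0$. In the ``$c$ odd'' case the parity analysis above puts us in the ``exactly two odd weights'' situation, which I would rule out by the even-weight subcode argument of Lemma~\ref{lemma_n_divisible_by_2} (now available since $n$ is even); the only remaining possibility is $c=0$, the desired conclusion. A finite check of the small values of $2k_1+k_2$ and $n$ not covered by the estimates --- for short lengths already carried out in Subsection~\ref{subsec_z4_codes_small_length} --- then finishes the proof.

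The step I expect to be the real obstacle is making the archimedean and $2$-adic bounds actually meet. In Proposition~\ref{prop_divisibility_n} the hypothesis $w_2=n$, i.e.\ $c=0$, removed an entire parameter, and it is precisely this that lets the size-versus-valuation estimates close; with $c\neq 0$ the surviving degree of freedom makes those estimates too loose to reach a contradiction from $A_1,A_2,A_3,B_3$ alone, and the descent on the $2$-adic valuation of $c$ need not terminate at $0$. Pushing the argument through will, I expect, require additional input: non-negativity of the further dual weights $B_4,B_5,\dots$ (each determined by $n$, $w_1,w_2,w_3$ and $2k_1+k_2$ through the higher Pless power moments), a Griesmer-type upper bound for the length of a projective $\Z_4$-code of given shape, or structural results on $\Z_4$-codes with few weights of the kind developed in \cite{SW,ShiXuYang2017}. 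Absent such an ingredient the argument stalls, which is why the statement stands here only as a conjecture.
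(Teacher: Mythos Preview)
This statement is a \emph{conjecture} in the paper, not a theorem: there is no proof of it in the paper to compare your proposal against. Your write-up is likewise not a proof but a strategy outline, and you yourself concede in the final paragraph that it stalls.

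The paper's remark immediately following the conjecture makes the obstacle concrete and shows that your plan, as it stands, cannot succeed. The integrality and positivity constraints $A_1,A_2,A_3\in\mathbb{N}_{>0}$, $B_3\in\mathbb{N}_{\ge 0}$ from Lemma~\ref{lemma_3wt_parametric} --- precisely the Diophantine input your archimedean/$2$-adic argument feeds on --- are explicitly declared insufficient there, and four exceptional parameter tuples with $n\le 50$ and $w_2\neq n$ passing all of them are listed. Worse for your first suggested remedy: for the tuples at $n\in\{29,34,50\}$ the paper records that the \emph{entire} MacWilliams transform yields $B_i\in\mathbb{N}_{\ge 0}$ for every $i$, so invoking ``non-negativity of the further dual weights $B_4,B_5,\ldots$'' cannot close the gap either. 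The tuple at $n=34$, for instance, has $w=(30,32,40)$, i.e.\ $a=4$ and $c=-2$ in your parametrisation, with all three weights even; neither your parity reduction nor the even-weight-subcode step bites there. Note also that the argument of Lemma~\ref{lemma_n_divisible_by_2} hinges on $n$ being \emph{odd} in order to pin down $w_2=n$; it is not ``now available since $n$ is even'' in the way you suggest.

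Your closing instinct --- that genuine structural input beyond weight-enumerator constraints is required --- is exactly the paper's position: Lemma~\ref{lemma_n_divisible_by_2} settles the conjecture for $n<34$ only because it uses the actual subcode structure, and beyond that the question is left open.
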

We remark that in order to prove Conjecture~\ref{conj_w2_n} the Diophantine conditions of Lemma~\ref{lemma_3wt_parametric}, i.e., $A_1,A_2,A_3\in\mathbb{N}_{>0}$ and
$B_3\in\mathbb{N}_{\ge 0}$ are not sufficient. Up to $n=50$ we have the following exceptional tuples $\left(n,w_1,w_2,w_3,y,A_1,A_2,A_3,B_3\right)$:
\begin{itemize}
  \item $(29,24, 31, 32, 64, 76, 128, 51, 164)$
  \item $(33, 29, 32, 38, 64, 64, 111, 80, 157)$
  \item $(34, 30, 32, 40, 128, 64, 299, 148, 36)$
  \item $(50, 46, 48, 56, 64, 32, 145, 78, 580)$
\end{itemize}
So, from Lemma~\ref{lemma_n_divisible_by_2} we can conclude that Conjecture~\ref{conj_w2_n} is true for all $n<34$.
We remark that the examples for $n\in\{29, 34, 50\}$ satisfy $B_i\in\mathbb{N}_{\ge 0}$ for all $1\le i\le n$, i.e., the MacWilliams identities
are not sufficient in order to conclude the non-existence of the corresponding code.

\subsection{Classification of $3$-weights $\Z_4$-codes with $d^\bot\ge 3$ and short length}
\label{subsec_z4_codes_small_length}

Having Theorem~\ref{CWnew} in mind we aim to classify $3$-weights $\Z_4$-codes with $d^\bot\ge 3$ and short length.
Optimal linear codes over $\mathbb{Z}_4$ with length $n\le 7$ were classified in \cite{wong2002clasification}.
Without the restriction to optimal codes, all linear codes over $\mathbb{Z}_4$ with length $n\le 10$ and $2K_1+k_2\le 8$ where classified in \cite{feulner2011canonization} for the homogeneous weight. The
corresponding tables at \url{http://www.algorithm.uni-bayreuth.de/en/research/Coding_Theory/CanonicalForm/Classification/index.html} contain the complete counts for $n\le 10$. E.g.\ for
$n=10$ and $(k_1,k_2)=10$ there are exactly $8\,848\,026$ non-isomorphic codes.
For each given $n$ our approach is to loop over all $1\le w_1<w_2<w_3\le 2n$ such that $A_1,A_2,A_3,B_3$, as specified in Lemma~\ref{lemma_3wt_parametric},
are integers with $A_1,A_2,A_3\ge 1$ and $B_3\ge 0$. Since $2^{2k_1+k_2-2}$ has to divide $w_1w_2w_3$, which is due to $B_3\in\mathbb{N}$, we obtain a finite list of
possibilities for each length $n$. 
Note that $d^\bot \ge 3$ implies $k_1\ge 1$. With respect to Theorem~\ref{CWnew} we consider all remaining tuples where $w_1+w_2+w_3\ge 3n$ and $w_1+w_2+w_3\equiv 0\pmod 3$.
In Table~\ref{tab_attainable} we list the parameters that can be attained by a $\mathbb{Z}_4$ codes and refer to a corresponding generator matrix. As abbreviations we use
$w=(w_1,w_2,w_3)$, $S=w_1+w_2+w_3$, and $A=(A_1,A_2,A_3)$. The parameters of computationally excluded parameters are listed in Table~\ref{tab_excl_eq} and Table~\ref{tab_excl_lt}.
For the latter we perform an exhaustive search using the given restrictions on the weights $w_i$, the length $n$, and the shape $(k_1,k_2)$. 

\begin{table}
  \begin{center}
    \begin{tabular}{lllll}
      \hline
      $n$ & $w$ & $A$ & $(k_1,k_2)\to G$ \\
      \hline
      6 & (4,6,8) & (6,16,9) & $(2,1)\to G_{6,1}^1$\\
      6 & (4,6,8) & (18,24,21) & $(3,0)\to G_{6,2}^1$, $(2,2)\to G_{6,3}^1$\\
      8 & (4,8,12) & (1,27,3) & $(2,1)\to G_{8,1}^1$ \\
      8 & (4,8,12) & (5,51,7) & $(3,0)\to G_{8,2}^1$, $(2,2)\to G_{8,3}^1$ \\
      \hline
      3 & (2,4,6) & (15,15,1) & $(2,1)\to G_{3,1}^2$\\
      5 & (4,6,8) & (16,12,3) & $(2,1)\to G_{5,1}^2$\\
      7 & (6,8,10) & (42,7,14) & $(3,0)\to G_{7,1}^2$\\
      9 & (8,10,12) & (15,12,4) & $(2,1)\to G_{9,1}^2$\\
      10 & (8,12,16) & (62,64,1) & $(3,1)\to G_{10,1}^2$ \\
      10 & (8,12,16) & (130,120,5) & $(4,0)\to G_{10,2}^2$ \\
      \hline
    \end{tabular}
    \caption{Attainable parameters for $S\equiv 0\pmod 3$}
    \label{tab_attainable}
  \end{center}
\end{table}

$$
  G_{6,1}^1=\begin{pmatrix}
  1 0 1 1 1 2\\
  0 1 0 3 3 1\\
  0 0 2 2 0 0
  \end{pmatrix}\!\!,
  G_{6,2}^1=\begin{pmatrix}
  1 0 0 1 2 2\\
  0 1 0 2 1 2\\
  0 0 1 3 3 1
  \end{pmatrix}\!\!,
  G_{6,3}^1=\begin{pmatrix}
  1 0 1 1 1 2\\
  0 1 1 1 2 1\\
  0 0 2 0 0 2\\
  0 0 0 2 2 0
 \end{pmatrix}
$$

$$
  G_{8,1}^1=\begin{pmatrix}
  1 0 1 0 1 1 2 2\\
  0 1 1 1 2 3 1 1\\
  0 0 2 2 0 2 0 2
  \end{pmatrix}\!\!,
  G_{8,2}^1=\begin{pmatrix}
  1 0 0 0 1 2 2 2\\
  0 1 0 2 2 1 1 1\\
  0 0 1 1 0 0 1 3
  \end{pmatrix}\!\!,
  G_{8,3}^1=\begin{pmatrix}
  1 0 1 1 1 1 1 2\\
  0 1 1 1 2 3 3 1\\
  0 0 2 0 0 0 2 0\\
  0 0 0 2 0 0 2 0
 \end{pmatrix}
$$

$$
  G_{3,1}^2=\begin{pmatrix}
  1 0 1\\
  0 1 1\\
  0 0 2\\
  \end{pmatrix}\!\!,
  G_{5,1}^2=\begin{pmatrix}
  1 0 1 2 2\\
  0 1 1 1 1\\
  0 0 2 0 2
  \end{pmatrix}\!\!,
  G_{7,1}^2=\begin{pmatrix}
  1 0 0 1 1 1 2\\
  0 1 0 1 2 3 1\\
  0 0 1 2 1 3 3
  \end{pmatrix}
$$
$$
  G_{9,1}^2=\begin{pmatrix}
  1 0 1 1 1 1 1 2 2\\
  0 1 1 2 2 3 3 1 1\\
  0 0 2 0 2 0 2 0 2
  \end{pmatrix}\!\!,
  G_{10,1}^2=\begin{pmatrix}
  1 0 0 0 2 1 1 1 1 1\\
  0 1 0 1 2 2 3 0 1 0\\
  0 0 1 0 1 1 1 2 3 3\\
  0 0 0 2 2 2 0 2 2 2
  \end{pmatrix}\!\!,
  G_{10,2}^2=
  \begin{pmatrix}
  1 0 0 0 2 1 2 1 1 0\\
  0 1 0 0 0 3 1 2 3 2\\
  0 0 1 0 1 3 2 0 1 2\\
  0 0 0 1 2 1 0 2 3 1
  \end{pmatrix}
$$

\begin{table}
  \begin{center}
    \begin{tabular}{lllll}
      \hline
      $n$ & $2k_1+k_2$ & $w$ & $A$ & $(k_1,k_2)$ \\
      \hline
      2 & 3 & (1,2,3) & (1,3,3) & (1,1)\\
      4 & 4 & (2,4,6) & (1,11,3) & (2,0), (1,2)\\
      4 & 5 & (2,4,6) & (5,19,7) & (2,1), (1,3) \\
      4 & 6 & (2,4,6) & (13,35,15) & (3,0), (2,2), (1,4) \\
      6 & 5 & (4,6,8) & (6,16,9) & (1,3) \\
      6 & 6 & (4,6,8) & (18,24,21) & (1,4) \\
      8 & 5 & (6,8,10) & (6,15,10) & (2,1), (1,3) \\
      8 & 6 & (6,8,10) & (22,15,26) & (3,0), (2,2), (1,4) \\
      8 & 7 & (6,8,10) & (54,15,58) & (3,1), (2,3), (1,5) \\
      8 & 5 & (4,8,12) & (1,27,3) & (1,3) \\
      8 & 6 & (4,8,12) & (5,51,7) & (1,4) \\
      8 & 7 & (4,8,12) & (13,99,15) & (3,1), (2,3), (1,5)\\
      10 & 5 & (8,10,12) & (5,16,10) & (2,1), (1,3) \\
      10 & 6 & (8,10,12) & (25,8,30) & (3,0), (2,2), (1,4) \\
      \hline
    \end{tabular}
    \caption{Computationally excluded cases for $S=3n$}
    \label{tab_excl_eq}
  \end{center}
\end{table}

\begin{table}
  \begin{center}
    \begin{tabular}{lllll}
      \hline
      $n$ & $2k_1+k_2$ & $w$ & $A$ & $(k_1,k_2)$ \\
      \hline
      3 & 5 & (2,4,6) & (15,15,1) & (1,3)\\
      5 & 5 & (4,6,8) & (16,12,3) & (1,3)\\
      7 & 5 & (6,8,10) & (16,11,4) & (2,1), (1,3)\\
      7 & 6 & (6,8,10) & (42,7,14) & (2,2), (1,4)\\
      7 & 7 & (4,8,12) & (31,95,1) & (3,1), (2,3), (1,5)\\
      7 & 8 & (4,8,12) & (65,187,3) & (4,0), (3,2), (2,4), (1,6) \\
      9 & 5 & (8,10,12) & (15,12,4) & (1,3) \\
      9 & 6 & (8,11,14) & (43,16,4) & (3,0), (2,2), (1,4)\\
      10 & 7 & (8,12,16) & (62,64,1) & (2,3), (1,5) \\
      10 & 8 & (8,12,16) & (130,120,5) & (3,2), (2,4), (1,6) \\
      \hline
    \end{tabular}
    \caption{Computationally excluded cases for $S>3n$, $S\equiv 0\pmod 3$}
    \label{tab_excl_lt}
  \end{center}
\end{table}

We remark that more examples can be found for the parameters:
\begin{itemize}
  \item $n=15$, $k=9$, $w=(12,16,20)$, $A=(190,255,66)$;
  \item $n=18$, $k=8$, $w=(16,20,24)$, $A=(153,72,30)$;
  \item $n=22$, $k=7$, $w=(20,24,28)$, $A=(71,43,13)$.
\end{itemize}

\subsection{Cyclic Kerdock code}
The Kerdock code $K$ of length $2^{s}$ with $s\ge 3$ odd over $\Z_4$ can be expressed as
$$K=\Z_4 {\bf j} \oplus Q,$$ where $\bf j$ denotes the all-one vector. The code $Q$ is a free code of size $4^s$ with weight spectrum
$$2^s, 2^s\pm 2^{\frac{s-1}{2}}. $$ By puncturing on the first coordinate we obtain a cyclic code of length $n=2^{s}-1$ that  was used in \cite{S} to generate
low correlation quadriphase sequences. This code is called $K^-$ in \cite{HKCSS} and has the same weights as $K$. It can be shown to be projective by the same
type of arguments as in \cite[\S 3.3]{HKCSS}. The sum of these weights being $3\cdot 2^s=3(n +1)$, we see that, by Theorem \ref{CWnew} with $b=2$,
the syndrome graph of $K^-$ with a double loop at each vertex is a $3$-SWRG. It can be easily checked that the parameters of $K^-$ go in line
with our results the previous subsection

We observe that the binary images have a length congruent to $2 \pmod{4}$, which is enough to show they are not linear \cite{K+}. This is an alternative proof
that Kerdock codes cannot be linear. The classical proof builds on the fact that the Preparata code, or any code with the same weight distribution is nonlinear \cite{G}.
\section{Extension to chain rings}
\label{sec_cr}
In this section, we briefly indicate how the preceding work extends when replacing $\Z_p^m$ by a finite chain ring $R$ of depth $r\ge 2$, say, and residue field
$\F_q$, for some prime power $q$. Recall that such a ring is a local ring whose lattice of ideals form a chain of length $r$. In particular, the maximal ideal
is principal, generated by an element $\gamma,$ a nilpotent element of nilpotency index $r$ \cite[Chap. 3]{SAS}. An example is $\Z_p^m$, when $r=m$, and $q=p$.
The definition of the homogeneous weight becomes

\begin{equation*}
\whom(x) =
\begin{cases}
    0  &  \text{if $x = 0$},\\
    q^{r-1}   & \text{if $0\neq x\in (\gamma^{e-1})$},\\
    (q-1)q^{r-2}  & \text{otherwise}.
\end{cases}
\end{equation*}

The definition of the syndrome graph in Subsection~\ref{subsec_syndrome_graph} to this generalized situation only needs the obvious
modifications. Also the proof of \cite[Theorem 4.1]{S+} can be generalized to conclude the connection between coset graph eigenvalues and weights:
\begin{theorem}\label{10cr}
  Suppose that $C$ is a regular, projective linear code over $R$ with
  homogeneous weights $w_i$ and corresponding weight distribution
  $A_i=\abs{\{x\in C;\whom(x)=w_i\}}$. Then
  the eigenvalues of $\Gamma(C^\bot)$ are $n(q-1)q^{r-1}-qw_i$ with
  multiplicity $A_i$.
\end{theorem}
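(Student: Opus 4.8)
The plan is to re-run the proof of \cite[Theorem~4.1]{S+} (equivalently Theorem~\ref{thm_eigenvalues}) with $\Z_{p^m}$ replaced by the chain ring $R$ and the prime $p$ replaced by the residue field size $q$, since that argument rests on only two structural inputs, both of which survive the generalization. The first is that a finite chain ring is a Frobenius ring, hence possesses a \emph{generating character} $\psi\colon R\to\C^\times$, i.e.\ a character that is nontrivial on every nonzero ideal; consequently the characters of the quotient group $R^n/C^\bot$ are precisely the maps $\chi_v(a+C^\bot)=\psi(v\cdot a)$ with $v$ ranging over the annihilator $(C^\bot)^\bot=C$. The second is the character-sum description of the homogeneous weight,
\[
\sum_{u\in R^\times}\psi(uy)\;=\;|R^\times|\cdot\Bigl(1-\tfrac{1}{\gamma}\whom(y)\Bigr)\qquad\text{for all }y\in R,
\]
with $\gamma=(q-1)q^{r-2}$ the normalization fixed in Section~\ref{sec_cr}.

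First I would recall, with the obvious modifications to Subsection~\ref{subsec_syndrome_graph}, that $\Gamma(C^\bot)$ is the Cayley graph $\cayley\!\bigl(R^n/C^\bot,\,S^*\bigr)$ with $S^*=\{ue_i+C^\bot:\ 1\le i\le n,\ u\in R^\times\}$, the parity-check matrix of $C^\bot$ being a generator matrix $G=[g_1\mid\dots\mid g_n]$ of $C$. Here ``$C$ regular'' (each $g_i$ has a unit entry) forces every orbit $\{ue_i+C^\bot:u\in R^\times\}$ to have full size $|R^\times|=(q-1)q^{r-1}$, and ``$C$ projective'' forces distinct columns to give disjoint orbits, so that $\Gamma(C^\bot)$ is a simple $n(q-1)q^{r-1}$-regular graph.

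Next I would apply the standard formula for the spectrum of a Cayley graph of a finite abelian group: the eigenvalues of $\Gamma(C^\bot)$ are
\[
\theta_v\;=\;\sum_{s\in S^*}\chi_v(s)\;=\;\sum_{i=1}^{n}\ \sum_{u\in R^\times}\psi(u v_i),
\]
one for each $v\in C$, using $\chi_v(ue_i+C^\bot)=\psi(u v_i)$. Substituting the character-sum description of $\whom$ into the inner sum and summing over $i$ with $\whom(v)=\sum_{i=1}^n\whom(v_i)$ and $|R^\times|/\gamma=q$ gives $\theta_v=n(q-1)q^{r-1}-q\,\whom(v)$. Collecting the characters $v\in C$ by the value of $\whom(v)$, the eigenvalue $n(q-1)q^{r-1}-qw_i$ occurs with multiplicity $A_i=|\{v\in C:\whom(v)=w_i\}|$ (and $v=0$ reproduces the valency), which is the claim.

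The only genuinely new point compared with the $\Z_{p^m}$ case is that the two structural inputs transfer, and I expect the character-sum identity for $\whom$ over an arbitrary chain ring to be the delicate step. I would dispatch it either by citing the literature on homogeneous weights over Frobenius rings \cite{greferath-schmidt99,st:homippi}, or by a short direct computation: for $y$ with $Ry=(\gamma^{j})$ one has $uy=u'y$ iff $u-u'\in(\gamma^{r-j})$, so $\sum_{u\in R^\times}\psi(uy)$ is a fixed multiple of $\sum_{z\in(\gamma^{j})}\psi(z)-\sum_{z\in(\gamma^{j+1})}\psi(z)$, while an ideal-sum $\sum_{z\in I}\psi(z)$ vanishes for every nonzero ideal $I$ and equals $1$ for $I=0$ because $\psi$ is a generating character; matching the resulting values against $\whom$ on the three orbits $\{0\}$, $(\gamma^{r-1})\setminus\{0\}$, and $R\setminus(\gamma^{r-1})$ yields the displayed identity. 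Everything else is identical to the proof of \cite[Theorem~4.1]{S+}.
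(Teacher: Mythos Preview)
Your proposal is correct and follows exactly the route the paper indicates: the paper does not write out a proof but simply states that ``the proof of \cite[Theorem 4.1]{S+} can be generalized,'' and your argument is precisely that generalization, with the two structural inputs (existence of a generating character on a Frobenius ring, and the character-sum expression for $\whom$) correctly identified and verified for chain rings. The additional care you take in checking the character-sum identity case by case and in verifying that regularity and projectivity of $C$ make $S^*$ a set of size $n(q-1)q^{r-1}$ not containing $0$ is more detail than the paper supplies, but the approach is the same.
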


In particular, for the coset graph with loops, we have the following result.
{\corollary \label{extcorcr} If $C$ is an $R$-code, of minimum distance at least three, with dual weight distribution $[\langle i,A_i\rangle]$, then the spectrum of $\Gamma_C^b$ is
$\{(b+n(q-1)q^{r-1}-qi)^{A_i}\}$. Thus $A_i$ is the frequency
of the homogeneous weight $i$ in $C^\bot$ and the multiplicity of the eigenvalue $b+n(q-1)q^{r-1}-qi$.}

From there, upon observing that Theorem \ref{equiv} is valid for any finite commutative ring,
the main condition for the existence of a TSS becomes
{\theorem \label{CWnewcr} Assume that $C(\Omega)^\bot$ is of length $n$ and has three {nonzero} weights $w_1<w_2<w_3.$ Let $b$ be any integer.
\begin{itemize}
\item $\Omega$ is a TSS iff $w_1+w_2+w_3=3n(q-1)q^{r-2}$;
\item $\Omega \bigcup 0$ is a TSS iff $w_1+w_2+w_3=3\frac{1+n(q-1)q^{r-1}}{q}$;
\item $\Gamma_{C(\Omega)}^b$ is a $3$-SWRG iff $w_1+w_2+w_3=3\frac{b+n(q-1)q^{r-1}}{q}$;
\end{itemize}
}
We remark that $r\ge 2$ and $q$ not being divisible by $3$ the second case is impossible and $b$ has to be divisible by $q$ in the third case.

\subsection{ Trace codes over chain  rings}
\label{subseb_tcr}
 Consider the chain ring $\F_p+u\F_p,$ with $u^2=0.$ This is a chain ring with $r=2$ and $q=p.$
The  codes $C(m,p)$ of length $\frac{p^{2m}-p^m}{2}$ with $m$ singly even in \cite[\S 5.1]{Sh3} are defined as
$$ \{(Tr(ax))_{x\in L} |\; a \in \mathcal R \},$$ where $L=Q+u\F_p^m,$ \, $Q$ is the set of squares of $\F_{p^m},$ and $\mathcal{R}=\F_{p^m}+u\F_{p^m}.$
Note that, since $m$ is even, we have $\F_p^\times \subseteq Q.$  This implies that these codes are replications by a factor $p-1$
  of projective codes $P(m,p)$ say of length $n=\frac{p^{2m}-p^m}{2(p-1)}$.
These latter codes have weights $$p^{2m-1}-p^{m-1},\,p^{2m-1}-p^{m-1}- p^{m-1}(p^{m/2}+1),\, p^{2m-1}-p^{m-1}+ p^{m-1}(p^{m/2}-1),$$ and so
  satisfy the relation $$ w_1+w_2+w_3=3 (1-1/p)n,$$ for the weight of the Gray map in
\cite{Sh3}, normalized by a factor $2$. This shows, by the first condition of Theorem \ref{CWnewcr}, that
the coset graph $\Gamma_C$  is a $3$-SWRG, or, equivalently, that the $\Omega$ such that $C(\Omega)=C$ is a TSS.


\subsection{Generalized Teichm\"uller codes}
Let $q = 2^r$ and $R = \operatorname{GR}(4,r)$ the Galois ring of characteristic $4$ and residue field $\F_{2^r}$.
The order of $R$ is $q^2$.
For integers $k\geq 2$ and
\[
	s\in\begin{cases}
		\{0,2,4,\ldots,(k-1)r\} & \text{if }k\text{ odd;} \\
		\{r,r+2,r+4,\ldots,(k-1)r\} & \text{if }k\text{ even,}
	\end{cases}
\]
the \emph{generalized Teichm\"{u}ller codes} $\mathcal{T}_{q,k,s}$ are constructed in \cite[Sec.~3.1]{K-Diss}, \cite{K-generalized}, generalizing the \emph{Teichm\"{u}ller codes} in \cite{KZ}.
They are regular projective $R$-linear codes of length $n = 2^s\cdot\frac{q^k-1}{q-1}$ and with three non-zero homogeneous weights, scaled by the factor $1/q^{r-2}$,%
\footnote{The scaling of the homogeneous weight in \cite{K-Diss} and \cite{KZ} differs by ours by the factor $q^{r-2}$.}
\[
	w = \left(
	2^s q^k - 2^{s/2} q^{(k-1)/2},\;
	2^s q^k,\;
	2^s q^k + 2^{s/2} q^{(k-1)/2}
	\right)
\]
of frequencies
\[
	A = \left(
	\frac{1}{2}(q^k-1)(q^k + 2^{s/2}q^{(k+1)/2}),\;
	q^k-1,\;
	\frac{1}{2}(q^k-1)(q^k - 2^{s/2}q^{(k+1)/2})
	\right)\text{.}
\]
From the length we get $2^s q^k = n(q-1)$.
Thus, the sum of the weights is $S = 3\cdot 2^s q^k q^{r-2} = 3(n(q-1) + 2^s) q^{r-2} = \frac{3}{q}(b + n(q-1)q^{r-1})$ for $b = 2^s q^{r-1}$.
By Theorem~\ref{CWnewcr}  we see that the coset graph with $b$ loops on each vertices is a $3$-SWRG.

We remark that in \cite{KZ} and \cite{K-Diss}, more projective three-weight-codes over Galois rings are found, among them the dualized extended Kerdock codes $\hat{\mathcal{K}}^*_{k+1}$ and the dualized generalized Teichm\"{u}ller codes $\mathcal{T}^*_{q,k,s}$.
However, they are either not regular or the sum of the weights does not satisfy the divisibility conditions needed in the context of this paper.

%
%
%
%
%
%
\subsection{Classification of $3$-weights codes over $\F_2+u\F_2$
 with $d^\bot\ge 3$ and short length}

In a similar manner as what we did over $\Z_4,$ we classify $3$-weight codes with dual weight $d^\perp \ge 3$ over the other proper chain ring of order $4$, which is the ring $R = \F_2+u\F_2$ of dual numbers over $\F_2$.
The Gray image of such a code is always a linear binary code, and a linear binary code is the Gray image of an $R$-linear code if and only if it has a fixed-point free automorphism of order $2$ \cite{Honold-Landjev-1999-IEEETIT45[2]:700-701}.
We classify the codes in the same range of parameters as in the $\Z_4$-case, that is $n \leq 10$, $S \geq 3n$ and $S\equiv 0\mod 3$.
Since the MacWilliams identities for $R$-linear codes are the same as for $\Z_4$-linear codes (and up to doubling the length, the same as the MacWillimans identities for the linear Gray image), we can start with the same list of feasable parameters we computed for $\Z_4$-linear codes.

The feasable parameters, together with the weight distribution and the value $S - 3n \geq 0$, are listed in Table~\ref{tab_s2}.
The column $(k_1,k_2)$ lists the shapes of the realizable codes.
An entry ``$-$'' indicates that no $R$-linear code with the given parameters does exist.

\begin{table}
  \begin{center}
    $\begin{array}{lllll}
      \hline
      n & 2k_1+k_2 & \text{weights} & S-3n & (k_1,k_2)\\
      \hline
      2 & 3 & 1^{1} 2^{3} 3^{3} & 0 & - \\
      4 & 4 & 2^{1} 4^{11} 6^{3} & 0 & -\\
      4 & 5 & 2^{5} 4^{19} 6^{7} & 0 & -\\
      4 & 6 & 2^{13} 4^{35} 6^{15} & 0 & -\\
      6 & 5 & 4^{6} 6^{16} 8^{9} & 0 & (2,1) \\
      6 & 6 & 4^{18} 6^{24} 8^{21} & 0 & (3,0), (2,2) \\
      8 & 5 & 6^{6} 8^{15} 10^{10} & 0 & - \\
      8 & 6 & 6^{22} 8^{15} 10^{26} & 0 & -\\
      8 & 7 & 6^{54} 8^{15} 10^{58} & 0 & -\\
      8 & 5 & 4^{1} 8^{27} 12^{3} & 0 & (2,1) \\
      8 & 6 & 4^{5} 8^{51} 12^{7} & 0 & (3,0), (2,2) \\
      8 & 7 & 4^{13} 8^{99} 12^{15} & 0 & - \\
      10 & 5 & 8^{5} 10^{16} 12^{10} & 0 & - \\
      10 & 6 & 8^{25} 10^{8} 12^{30} & 0 & - \\
      3 & 5 & 2^{15} 4^{15} 6^{1} & 3 & (2,1) \\ 
      5 & 5 & 4^{16} 6^{12} 8^{3} & 3 & (2,1) \\
      7 & 5 & 6^{16} 8^{11} 10^{4} & 3 & - \\
      7 & 6 & 6^{42} 8^{7} 10^{14} & 3 & - \\
      7 & 7 & 4^{31} 8^{95} 12^{1} & 3 & - \\
      7 & 8 & 4^{65} 8^{187} 12^{3} & 3 & - \\
      9 & 5 & 8^{15} 10^{12} 12^{4} & 3 & (2,1) \\
      9 & 6 & 8^{43} 11^{16} 14^{4} & 6 & - \\ 
      10 & 7 & 8^{62} 12^{64} 16^{1} & 6 & (3,1) \\
      10 & 8 & 8^{130} 12^{120} 16^{5} & 6 & (4,0) \\
      \hline
    \end{array}$
    \caption{Classification for $R = \F_2[X]/(X^2)$, $S \geq 3n$, $n \leq 10$}
    \label{tab_s2}
  \end{center}
\end{table}

A list of suitable generator matrices for the realizable parameters is given in Table~\ref{tab_s2g}.
\begin{table}
	\begin{center}
      $\begin{array}{llllll}
      \hline
      n & k & \text{weights} & S-3n & (k_1,k_2) & G\\
      \hline
      3 & 5 & 2^{15} 4^{15} 6^{1} & 3 & (2,1) &
      \left(\begin{smallmatrix}
	      1 & 0 & 1 \\
	      0 & 1 & 1 \\
	      0 & 0 & X
      \end{smallmatrix}\right) \\
      5 & 5 & 4^{16} 6^{12} 8^{3} & 3 & (2,1) &
      \left(\begin{smallmatrix}
	      1 & 0 & 1 & 0 & X \\
	      0 & 1 & 1 & 1 & 1 \\
	      0 & 0 & 0 & X & X
      \end{smallmatrix}\right) \\
      6 & 5 & 4^{6} 6^{16} 8^{9} & 0 & (2,1) &
      	\left(\begin{smallmatrix}
		1 & 0 & X & X+1 & 1 & 1 \\
		0 & 1 & 1 &   1 & 1 & X \\
		0 & 0 & 0 &   0 & X & X
	\end{smallmatrix}\right) \\
      6 & 6 & 4^{18} 6^{24} 8^{21} & 0 & (3,0) &
	\left(\begin{smallmatrix}
	1 & 0 & 0 & 1 & 1 & X+1 \\
	0 & 1 & 0 & 1 & X &   0 \\
	0 & 0 & 1 & 1 & 1 &   1
	\end{smallmatrix}\right) \\
      6 & 6 & 4^{18} 6^{24} 8^{21} & 0 & (2,2) &
	\left(\begin{smallmatrix}
		1 & 0 & 1 & 1 & 0 &   1 \\
		0 & 1 & 1 & X & 1 & X+1 \\
		0 & 0 & X & X & 0 &   0 \\
		0 & 0 & 0 & 0 & X &   X
	\end{smallmatrix}\right) \\
      8 & 5 & 4^{1} 8^{27} 12^{3} & 0 & (2,1) &
	\left(\begin{smallmatrix}
		1 & 0 & X & 1 & 0 & 1 & X &   1 \\
		0 & 1 & 1 & X & 1 & 1 & 1 & X+1 \\
		0 & 0 & 0 & 0 & X & X & X &   X
	\end{smallmatrix}\right) \\
      8 & 6 & 4^{5} 8^{51} 12^{7} & 0 & (3,0) &
      \left(\begin{smallmatrix}
	1 & 0 & 0 & X & X & 0 & 1 &   X \\
	0 & 1 & 0 & 0 & 1 & 1 & 0 &   1 \\
	0 & 0 & 1 & 1 & 1 & X & X & X+1
      \end{smallmatrix}\right) \\
      8 & 6 & 4^{5} 8^{51} 12^{7} & 0 & (2,2) &
      \left(\begin{smallmatrix}
	1 & 0 & X & 1 & 1 & 1 & 1 & X+1 \\
	0 & 1 & 1 & 1 & 0 & 0 & 0 &   1 \\
	0 & 0 & 0 & X & X & 0 & X &   X \\
	0 & 0 & 0 & 0 & 0 & X & X &   0
      \end{smallmatrix}\right) \\
      9 & 5 & 8^{15} 10^{12} 12^{4} & 3 & (2,1) &
      \left(\begin{smallmatrix}
	    1 & 0 & 1 & X & 0 & 1 & 1 & 1 & X+1 \\
	    0 & 1 & 1 & 1 & 1 & X & 1 & 0 &   1 \\
	    0 & 0 & 0 & 0 & X & X & X & X &   0
      \end{smallmatrix}\right) \\
      10 & 7 & 8^{62} 12^{64} 16^{1} & 6 & (3,1) &
      \left(\begin{smallmatrix}
	    1 & 0 & 0 & 1 & X & 1 & X & 0 &   1 &   0 \\
	    0 & 1 & 0 & 1 & X & 0 & 1 & 1 & X+1 & X+1 \\
	    0 & 0 & 1 & 1 & 1 & 0 & 1 & X &   1 &   1 \\
	    0 & 0 & 0 & X & X & X & X & X &   0 &   X
      \end{smallmatrix}\right) \\
      10 & 8 & 8^{130} 12^{120} 16^{5} & 6 & (4,0) &
      \left(\begin{smallmatrix}
	    1 & 0 & 0 & 0 & 1 & 0 & X & X & X+1 & 1 \\
	    0 & 1 & 0 & 0 & X & X & 0 & 1 & X+1 & 1 \\
	    0 & 0 & 1 & 0 & 0 & X & 1 & X &   1 & 1 \\
	    0 & 0 & 0 & 1 & X & 1 & X & 0 &   1 & 1
      \end{smallmatrix}\right) \\
      \hline
      \end{array}$
    \caption{Generator matrices for $R = \F_2[X]/(X^2)$, $S \geq 3n$, $n \leq 10$}
    \label{tab_s2g}
      \end{center}
\end{table}

We remark that we found further examples of $R$-linear three-weight codes with $d^\perp \geq 3$, $S-3n \geq 0$ and $3\mid S$.
Their parameters are displayed in Table~\ref{tab_s2+}
\begin{table}
  \begin{center}
    $\begin{array}{llll}
      \hline
      n & k & \text{weights} & S-3n \\
      \hline
      11 &  9 &  8^{ 162} 12^{ 312} 16^{  37} &  3 \\ 
      11 & 10 &  8^{ 330} 12^{ 616} 16^{  77} &  3 \\ 
      13 &  6 & 12^{  45} 16^{  17} 20^{   1} &  9 \\ 
      15 &  9 & 12^{ 190} 16^{ 255} 20^{  66} &  3 \\ 
      18 &  8 & 16^{ 153} 20^{  72} 24^{  30} &  6 \\ 
      26 &  8 & 24^{ 172} 28^{  32} 32^{  51} &  6 \\ 
      42 &  8 & 40^{ 189} 48^{  63} 56^{   3} & 18 \\ 
      49 &  7 & 48^{  91} 52^{  28} 56^{   8} &  9 \\ 
      54 & 11 & 48^{ 724} 56^{1104} 64^{ 219} &  6 \\ 
      61 & 11 & 56^{ 980} 64^{ 847} 72^{ 220} &  9 \\ 
      63 & 13 & 56^{2556} 64^{4095} 72^{1540} &  3 \\ 
      \hline
    \end{array}$
    \caption{Further projective three-weight-codes over $R = \F_2[X]/(X^2)$ with $S \geq 3n$}
    \label{tab_s2+}
  \end{center}
\end{table}

\section{Conclusion and open problems}
In this work we have generalized from fields to rings the constructions of SWRGs from \cite{SS}. The rings considered here are $\Z_p^m,$ and more generally finite chain  rings.
The weight playing the role of the Hamming in this generalization is the homogeneous weight. This weight can defined in general over finite Frobenius rings. It would be of theoretical interest to consider three-weight codes over those rings.

We have classified short length three-weight $\Z_4$-codes leading to SWRGs. To undertake such a classification for higher lengths and other rings is a challenging open problem, both theoretically and computationally. The polynomial analogues of $\Z_p^m,$ namely the rings $\F_p[x]/(x^m),$ are worth considering.

\end{document}